\newtheorem{theorem}{Theorem}[section]
\newtheorem{proposition}{Proposition}[section]
\newtheorem{lemma}{Lemma}[section]
\newtheorem{corollary}{Corollary}[section]
\theoremstyle{definition}
\newtheorem{example}{Example}[section]
\newtheorem{remark}{Remark}[section]
\newcommand{\ce}{\mathcal{E}}
\newcommand{\cf}{\mathcal{F}}
\newcommand{\ci}{\mathcal{J}}
\newcommand{\ck}{\mathcal{K}}
\newcommand{\cl}{\mathcal{L}}
\newcommand{\co}{\mathcal{O}}
\newcommand{\cq}{\mathcal{Q}}
\newcommand{\cs}{\mathcal{S}}
\newcommand{\bp}{\mathbf{P}}
\renewcommand{\H}{\mathrm{H}}
\newcommand{\Ext}{\operatorname{Ext}}
\newcommand{\SExt}{\mathcal{E}xt}
\newcommand{\Hom}{\operatorname{Hom}}
\newcommand{\End}{\operatorname{End}}
\newcommand{\SHom}{\mathcal{H}om}
\newcommand{\Quot}{\operatorname{Quot}}
\newcommand{\Ker}{\operatorname{Ker}}
\newcommand{\Tr}{\operatorname{Tr}}
\newcommand{\vir}{\operatorname{vir}}
\begin{document}

\title{\Large\textbf{Cosection localization and the Quot scheme $\Quot^{l}_{S}(\ce)$}}
\author{\normalsize SAMUEL STARK}
\date{}
\maketitle

\begin{abstract}
Let $\ce$ be a locally free sheaf of rank $r$ on a smooth projective surface $S$. The Quot scheme $\Quot^{l}_{S}(\ce)$ of length $l$ coherent sheaf quotients of $\ce$ is 
a natural higher rank generalization of the Hilbert scheme of $l$ points of $S$. We study the virtual intersection theory of this scheme. If $C\subset S$ is a smooth canonical curve, we use cosection localization to show that the virtual fundamental class of $\Quot^{l}_{S}(\ce)$ is $(-1)^{l}$ times the fundamental class of the smooth subscheme $\Quot^{l}_{C}(\ce\vert_{C})\subset\Quot^{l}_{S}(\ce)$.
We then prove a structure theorem for virtual tautological integrals over $\Quot^{l}_{S}(\ce)$. From this we deduce, among other things, the equality of virtual Euler characteristics
$\chi^{\vir}(\Quot^{l}_{S}(\ce))=\chi^{\vir}(\Quot^{l}_{S}(\co^{\oplus r}))$.	
\end{abstract}

\section{Introduction}\label{sec1}

Let $S$ be a smooth projective complex surface. The Hilbert scheme of $l$ points $S^{[l]}$ is a canonical smooth compactification
of the configuration space of $l$ unordered points of $S$, which occurs naturally in several branches of mathematics. A point of $S^{[l]}$
corresponds to a zero-dimensional closed subscheme $Z\subset S$ of length $l$; since a subscheme can be specified by either the quotient $\co\rightarrow\co_{Z}$
or by its kernel (i.e. the ideal sheaf of $Z\subset S$), there are two natural generalizations of $S^{[l]}$:
\begin{enumerate}[label=(\arabic*)]
\item the Quot schemes  $\Quot^{l}_{S}(\ce)$ of length $l$ quotients of a locally free sheaf $\ce$ of higher rank $r$; \label{eins}
\item the moduli spaces of stable sheaves of higher rank. \label{zwei}
\end{enumerate}
Common to both higher rank generalizations is that they are typically singular, and both admit a virtual fundamental class. By integrating universal cohomology classes over it, one obtains invariants of these moduli spaces. For \ref{zwei}, these are the (algebraic) Donaldson invariants of $S$, which are well studied, see for instance \cite{Do, EG, FS, Go2, GK, KM, Li, Moc, OG}; the study of the invariants corresponding to \ref{eins} is the subject matter of this paper. The schemes $\Quot^{l}_{S}(\ce)$ satisfy three basic properties:
\begin{enumerate}[label=(\Roman*)]
\item a quotient $\ce\rightarrow\ce^{''}$ induces a closed immersion \label{one} $$\iota:\Quot^{l}_{S}(\ce^{''})\rightarrow\Quot^{l}_{S}(\ce),$$ 
\item an invertible sheaf $\cl$ on $S$ induces an isomorphism  \label{two} $$\Quot^{l}_{S}(\ce)\xrightarrow{\sim}\Quot^{l}_{S}(\ce\otimes\cl),$$
\item the automorphism group of $\ce$ acts on $\Quot^{l}_{S}(\ce)$.  \label{three}
\end{enumerate}
Moreover, a locally free sheaf $\cf$ on $S$ induces an associated tautological sheaf $\cf^{[l]}$ on $\Quot^{l}_{S}(\ce)$, which is compatible with \ref{one}-\ref{three};
we then obtain invariants of $\Quot^{l}_{S}(\ce)$ by taking integrals of Chern classes of tautological sheaves over the virtual fundamental class
$$[\Quot^{l}_{S}(\ce)]^{\vir}\in A_{lr}(\Quot^{l}_{S}(\ce)).$$
In the special case $\ce=\co^{\oplus r}$, the fixed locus of the action of the automorphism group of $\ce$  is given by products of Hilbert schemes of points of $S$. It is this observation which is the point of departure of the recent work of Oprea and Pandharipande \cite{OP},
which studies these integrals using \ref{three} and the virtual torus localization technique of \cite{GP}.

It is natural to study these invariants for arbitrary locally free sheaves $\ce$, by systematically exploiting all three properties
\ref{one}-\ref{three}. Our first result says that using property \ref{one},
the virtual fundamental class can be localized along the zero locus of a $2$-form.
\begin{theorem}[\Cref{explicitquot}]\label{canonical}
Let $C\subset S$ be a smooth curve cut out by a $2$-form. Then
\begin{equation*}
[\Quot^{l}_{S}(\ce)]^{\vir}=(-1)^{l}\iota_{\ast}[\Quot^{l}_{C}(\ce\vert_{C})].
\end{equation*}
\end{theorem}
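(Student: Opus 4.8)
The plan is to use the cosection localization principle of Kiem–Li, together with the explicit form of the obstruction theory of $\Quot^l_S(\ce)$. The key structural input is that a $2$-form on $S$—that is, a section of the canonical bundle $\omega_S$ cutting out the smooth curve $C$—produces a cosection of the obstruction sheaf. First I would recall that the perfect obstruction theory on $Q:=\Quot^l_S(\ce)$ is governed by the universal short exact sequence $0\to\cs\to\ce\to\cq\to 0$ on $S\times Q$, with tangent–obstruction spaces given by $\Ext^i_{\pi}(\cs,\cq)$ for $i=0,1$, where $\pi:S\times Q\to Q$ is the projection. A global section $s\in\H^0(S,\omega_S)$ vanishing exactly on $C$ then yields, via Serre duality on the surface fibers and the pairing $\Ext^1_\pi(\cs,\cq)\to\Ext^2_\pi(\cq,\cq\otimes\omega_S)^{\vee}$ (or more directly by composing with the map induced by $s$), a cosection $\sigma:\mathcal{O}b_Q\to\co_Q$ of the obstruction sheaf.

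The crucial geometric step is to identify the degeneracy locus of $\sigma$, i.e.\ the locus where the cosection fails to be surjective. I expect this locus to be exactly $\Quot^l_C(\ce\vert_C)$, embedded in $Q$ via the closed immersion $\iota$ of property \ref{one} applied to the quotient $\ce\to\ce\vert_C$ (restriction to the canonical curve). Concretely, the cosection should vanish precisely at those quotients $\ce\to\ce''$ whose support is scheme-theoretically contained in $C$, since only there does the multiplication-by-$s$ map on the relevant $\Ext$ groups degenerate. Once the degeneracy locus is identified with $\Quot^l_C(\ce\vert_C)$, the cosection-localized virtual class of Kiem–Li lives in the Chow group of this smaller locus, and the theorem of \cite{GP}-type localization reduces the computation to intersection theory on it.

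The remaining step is to show that $\Quot^l_C(\ce\vert_C)$ is smooth of the expected dimension and that the localized virtual class equals $(-1)^l$ times its ordinary fundamental class. Smoothness should follow because on the curve $C$ the Quot scheme of a locally free sheaf of length-$l$ quotients is smooth—this is the higher-rank analogue of the fact that the Hilbert scheme of points on a curve (i.e.\ the symmetric product) is smooth. The sign $(-1)^l$ and the identification of the class must come from an analysis of the excess normal bundle: the difference between the virtual normal bundle of $Q$ along the degeneracy locus and the actual tangent space of $\Quot^l_C(\ce\vert_C)$ should contribute a top Chern class that evaluates to $(-1)^l$, reflecting that each of the $l$ points contributes a factor of $-1$ from the one-dimensional "normal to $C$" direction weighted by the cosection.

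The hard part will be making the degeneracy-locus identification precise and computing the excess contribution that produces the sign $(-1)^l$. Specifically, one must carefully track how the section $s$ of $\omega_S$ enters the obstruction theory and verify that the cosection is \emph{surjective} away from $\Quot^l_C(\ce\vert_C)$—this requires a local analysis of the obstruction sheaf near quotients not supported on $C$—and then compute the Kiem–Li localized class via the normal-cone/deformation argument, where the factor $(-1)^l$ emerges from the self-intersection of the zero locus of the $2$-form. I would expect this final excess computation to be the technical crux of the proof.
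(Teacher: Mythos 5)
Your strategy is the same as the paper's: use the $2$-form to produce a cosection of the obstruction sheaf (via the trace map and Serre duality), identify its zero locus with $\Quot^{l}_{C}(\ce\vert_{C})$, and then compute the Kiem--Li localized class $[\Quot^{l}_{S}(\ce)]^{\vir}_{\sigma}$, whose pushforward is $[\Quot^{l}_{S}(\ce)]^{\vir}$. Your fibrewise identification of the zero locus (quotients $\cq_{q}$ with support contained in $C$) and your smoothness claim for $\Quot^{l}_{C}(\ce\vert_{C})$ agree with parts (i) and (ii) of \Cref{explicitquot}. The gap is in the step you yourself flag as the crux. You propose to extract $(-1)^{l}$ from an excess-normal-bundle computation along the degeneracy locus, with the sign ``emerging from the self-intersection of the zero locus of the $2$-form.'' This is not viable as stated: $\Quot^{l}_{S}(\ce)$ is in general singular, of actual dimension $l(r+1)$, strictly larger than the virtual dimension $lr$, so along most of $\Quot^{l}_{C}(\ce\vert_{C})$ there is no bundle-theoretic description of the intrinsic normal cone and hence no ``excess bundle'' whose Euler class one could evaluate; the localized class is defined through the cone and the blowup construction of the localized Gysin map, not by a Chern-class formula. (Also, the sign is not a self-intersection contribution of $C$: it is $(-1)^{c}$ where $c=l$ is the codimension of $\Quot^{l}_{C}(\ce\vert_{C})$ in $\Quot^{l}_{S}(\ce)$.)

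The missing ideas are precisely those that make the computation possible despite the singularities, packaged in the paper as \Cref{explicit}. First, one needs $\Quot^{l}_{C}(\ce\vert_{C})$ to be \emph{irreducible} of dimension equal to the virtual dimension $lr$ (the paper proves irreducibility by induction on $l$ using flag Quot schemes); this forces $[\Quot^{l}_{S}(\ce)]^{\vir}_{\sigma}=n\,[\Quot^{l}_{C}(\ce\vert_{C})]$ for some integer $n$, reducing everything to a single multiplicity. Second, since the localized class is compatible with restriction to open subschemes (\Cref{open}), $n$ may be computed over any open subset meeting the zero locus; the paper takes the smooth locus of $\Quot^{l}_{S}(\ce)$, and must therefore show that this smooth locus meets $\Quot^{l}_{C}(\ce\vert_{C})$ --- a concrete geometric input (a point $q$ is smooth if and only if $\dim\End(\cq_{q})=l$, which holds for quotients factoring through an invertible sheaf on $C$) that your proposal never addresses. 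Third, once both the ambient scheme and the zero locus are smooth, the cone is an honest subbundle and an explicit computation with the Kiem--Li blowup construction gives $n=(-1)^{c}$; this is where the sign actually comes from. Without the first two reductions, the computation you defer to the end has no smooth locus to run on; with them, it becomes a tractable, purely intersection-theoretic calculation.
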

Localizing along the zeros of a $2$-form is a well-known technique in the study of moduli spaces associated to surfaces \cite{KL, MP, VW,Wit}.
In the case $\ce=\co^{\oplus r}$ \Cref{canonical} was proven in \cite{OP} using torus localization;
in practice, this result allows one to compute tautological integrals over $[\Quot^{l}_{S}(\ce)]^{\vir}$ in terms of tautological integrals
over the smooth scheme $\Quot^{l}_{C}(\ce\vert_{C})$. It thus plays a key role in all the computations of \cite{OP} and those contained in the follow-up papers \cite{JOP,Lim,AJLOP}.
Our proof has three ingredients: for a $2$-form $\omega\in\H^{0}(\Omega^{2}_{S})$ with zero scheme $C$, we construct a cosection $\sigma(\omega)$ of the obstruction sheaf of $\Quot^{l}_{S}(\ce)$ with zero scheme $\Quot^{l}_{C}(\ce\vert_{C})$. By the work of Kiem and Li on cosection localization \cite{KL}, we obtain a $\sigma(\omega)$-localized virtual fundamental class
$$[\Quot^{l}_{S}(\ce)]_{\sigma(\omega)}^{\vir}\in A_{lr}(\Quot^{l}_{C}(\ce\vert_{C}))$$ 
such that $\iota_{\ast}[\Quot^{l}_{S}(\ce)]_{\sigma(\omega)}^{\vir}=[\Quot^{l}_{S}(\ce)]^{\vir}$.
We show
$$[\Quot^{l}_{S}(\ce)]_{\sigma(\omega)}^{\vir}=(-1)^{l}[\Quot^{l}_{C}(\ce\vert_{C})]$$
by using (a) the dimension of $\Quot^{l}_{C}(\ce\vert_{C})$ equals the virtual dimension of $\Quot^{l}_{S}(\ce)$, (b) the smooth locus of $\Quot^{l}_{S}(\ce)$ intersects the smooth locus of $\Quot^{l}_{C}(\ce\vert_{C})$. Our proof in particular answers a question raised by Oprea and Pandharipande \cite{OP}; it applies more generally to any scheme with a perfect obstruction theory and a cosection of its obstruction sheaf satisfying properties analogous to (a) and (b). Although cosection localization has found many applications in enumerative geometry \cite{Chang1,Chang2,KL2,Maulik}, primarily in combination with Graber-Pandharipande's virtual torus localization \cite{GP}, in the absence of a torus action, the cosection localized virtual fundamental class is poorly understood. In our situation we can compute it completely explicitly, which is fairly rare.

The fundamental observation which guides our approach is that property \ref{one} yields relations in the intersection theory of the schemes $\Quot^{l}_{S}(\ce)$.
Apart from \Cref{canonical}, another instance of this is the following theorem, which the author proved in \cite{Stark}. It provides a way of changing the sheaf $\ce$,
and as such it plays a key role in extending the results of \cite{OP} to arbitrary locally free sheaves.
\begin{theorem}\label{lf}
Let $\ce\rightarrow\ce^{''}$ be a locally free quotient with kernel $\ce^{'}$. Then
\begin{equation*}
\iota_{\ast}[\Quot^{l}_{S}(\ce^{''})]^{\vir}=e(\ce^{'\vee[l]})\cap[\Quot^{l}_{S}(\ce)]^{\vir}.
\end{equation*}
\end{theorem}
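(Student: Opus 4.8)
The plan is to realise $Q'':=\Quot^{l}_{S}(\ce^{''})$ as the zero locus of a tautological section of the bundle $\ce^{'\vee[l]}$ on $Q:=\Quot^{l}_{S}(\ce)$, and then to match the two perfect obstruction theories so that the formula becomes an instance of the standard identity $\iota_{\ast}[Z(s)]^{\vir}=e(V)\cap[Q]^{\vir}$ for the zero locus of a section $s$ of a bundle $V$. Let $\pi\colon S\times Q\to Q$ denote the projection and let $0\to\cs\to p_{S}^{\ast}\ce\to\mathcal{T}\to 0$ be the universal sequence, with $\mathcal{T}$ flat and finite of length $l$ over $Q$. The composite $p_{S}^{\ast}\ce^{'}\hookrightarrow p_{S}^{\ast}\ce\twoheadrightarrow\mathcal{T}$ is a global section of $\SHom(p_{S}^{\ast}\ce^{'},\mathcal{T})=p_{S}^{\ast}\ce^{'\vee}\otimes\mathcal{T}$, and applying $\pi_{\ast}$ yields a section $s\in\H^{0}(Q,\,\pi_{\ast}(p_{S}^{\ast}\ce^{'\vee}\otimes\mathcal{T}))=\H^{0}(Q,\ce^{'\vee[l]})$. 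First I would check that the zero scheme $Z(s)$, which is where the universal quotient annihilates $\ce^{'}$ and hence factors through $\ce^{''}$, agrees as a closed subscheme of $Q$ with the image of $\iota$ from property~\ref{one}; this identifies $Q''=Z(s)$ and pins down $V:=\ce^{'\vee[l]}$, whose rank $lr'$ is exactly the codimension $lr-lr''$.

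Next I would record the shape of the obstruction theories. Because $S$ is a surface and $\mathcal{T}$ is supported in relative dimension zero, Serre duality gives $\SExt^{2}(\cs,\mathcal{T})\cong\SHom(\mathcal{T},\cs\otimes K_{S})^{\vee}=0$, so $\mathbb{E}_{Q}:=\bigl(R\pi_{\ast}R\SHom(\cs,\mathcal{T})\bigr)^{\vee}$ is a genuine two-term perfect obstruction theory of amplitude $[-1,0]$, with $h^{0}=\SHom_{\pi}(\cs,\mathcal{T})$ and obstruction sheaf $\SExt^{1}_{\pi}(\cs,\mathcal{T})$; its virtual dimension is $\chi(\cs,\mathcal{T})=\chi(\ce,\mathcal{T})=lr$. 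The same construction applied to the universal subsheaf $\cs^{''}=\Ker(p_{S}^{\ast}\ce^{''}\to\mathcal{T})$ over $Q''$ gives the obstruction theory $\mathbb{E}_{Q''}$.

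The heart of the argument is the compatibility of these theories. Over $Q''$ the inclusion $p_{S}^{\ast}\ce^{'}\subset\cs\vert_{Q''}$ produces the short exact sequence $0\to p_{S}^{\ast}\ce^{'}\to\cs\vert_{Q''}\to\cs^{''}\to 0$. Applying $R\SHom(-,\mathcal{T})$ and then $R\pi_{\ast}$, and using $R\pi_{\ast}R\SHom(p_{S}^{\ast}\ce^{'},\mathcal{T})=\ce^{'\vee[l]}$, yields a distinguished triangle which upon dualising reads $(\ce^{'\vee[l]})^{\vee}\to\mathbb{E}_{Q}\vert_{Q''}\to\mathbb{E}_{Q''}\xrightarrow{+1}$. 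This is precisely the triangle predicted for the obstruction theory of a zero locus, since $L_{Q''/Q}=\iota^{\ast}V^{\vee}[1]$ for the regular embedding $Q''=Z(s)$. The main obstacle I expect is not producing this triangle but verifying that it is genuinely a triangle of obstruction theories compatible with the truncated cotangent complexes, i.e.\ that the connecting map $(\ce^{'\vee[l]})^{\vee}\to\mathbb{E}_{Q}\vert_{Q''}$ is identified with the derivative $ds^{\vee}$ of the section $s$; this is a functoriality check that must be carried out at the level of the actual cotangent-complex maps defining the obstruction theories, not merely up to abstract isomorphism of triangles.

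Granting this compatibility, the conclusion follows from the comparison of virtual classes for the zero locus of a section of a bundle over a scheme carrying a perfect obstruction theory, which I would deduce from the deformation-to-the-normal-cone description of the intrinsic normal cone or, equivalently, from the functoriality of virtual pullback: the compatible triangle forces the intrinsic normal cone of $Q''$ to sit inside the pullback of that of $Q$ as the cone of $s$, and intersecting with the zero section of $V=\ce^{'\vee[l]}$ gives $\iota_{\ast}[Q'']^{\vir}=e(\ce^{'\vee[l]})\cap[\Quot^{l}_{S}(\ce)]^{\vir}$, as claimed.
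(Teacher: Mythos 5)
Your proposal is correct in outline, but it is important to locate it relative to the paper: this paper does not actually prove \Cref{lf} in full generality --- it imports it from \cite{Stark} (as \Cref{fundamentalclass}), recording only the two ingredients of that proof, namely the tautological section of $\ce^{'\vee[l]}$ induced by $\ce^{'}\rightarrow\ce\rightarrow\cq$ with zero scheme $\Quot^{l}_{S}(\ce^{''})$, and the exact triangle (\ref{eq:secondexact}). Your construction of $s$ and your distinguished triangle reproduce exactly these two ingredients (your triangle \emph{is} (\ref{eq:secondexact})), so what you have written is an outline of the proof in \cite{Stark}, not of the proof contained in this paper. The only complete proof of the statement given here is the corollary following \Cref{fundamentalclass}, and it is genuinely different from yours: assuming $S$ contains a smooth canonical curve $C$, it deduces the identity from the cosection-localization theorem (\Cref{explicitquot}) by chasing the commutative square (\ref{eq:Chow}), reducing the virtual statement on $S$ to an honest Euler-class computation on the smooth schemes $\Quot^{l}_{C}(\ce\vert_{C})\supset\Quot^{l}_{C}(\ce^{''}\vert_{C})$, where the cutting section is automatically regular. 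Your route has the advantage of working for every surface, which is the full strength of \Cref{lf}; the paper's in-text route buys a short argument at the price of a genericity hypothesis on $S$.

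As for completeness: the step you yourself call the heart --- that the triangle $(\ce^{'\vee[l]})^{\vee}\rightarrow \mathbb{E}_{Q}\vert_{Q''}\rightarrow\mathbb{E}_{Q''}$ commutes with the maps to the (truncated) cotangent complexes, with connecting map the one induced by $s$ --- is named but not carried out, and this verification is precisely the nontrivial content living in \cite{Stark}; granting it, your conclusion via functoriality of virtual pullbacks together with Fulton's identity $\iota_{\ast}\iota^{!}=e(\ce^{'\vee[l]})\cap(-)$ \cite{Fulton} is sound. Two further points deserve care. Your assertion $L_{Q''/Q}=\iota^{\ast}(\ce^{'\vee[l]})^{\vee}[1]$ presupposes that $Q''\subset Q$ is a \emph{regular} embedding; regularity of the section is itself a theorem of \cite{Stark} (quoted in this paper), not a freebie --- though your argument does not actually need it, since for any zero locus of a section the canonical map $\iota^{\ast}(\ce^{'\vee[l]})^{\vee}[1]\rightarrow L_{Q''/Q}$ is a relative obstruction theory, which is all that functoriality requires. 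Finally, the scheme-theoretic identification $Z(s)=\Quot^{l}_{S}(\ce^{''})$ is asserted rather than checked; it follows from a functor-of-points argument, but it too belongs to the content of \cite{Stark}. None of these is a wrong turn; they are exactly the places where the work happens, so your text should be read as a faithful skeleton of the cited proof rather than a self-contained replacement for it.
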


We observe that \Cref{canonical} in particular gives a simple proof of \Cref{lf} for surfaces $S$ containing a smooth canonical curve. By combining \Cref{canonical} and \Cref{lf}, we then obtain the following structure theorem for tautological integrals over $[\Quot^{l}_{S}(\ce)]^{\vir}$.
\begin{theorem}[\Cref{universality}]\label{universalityintro}
Let $P$ be a polynomial in the Chern classes of tautological sheaves $\cf_{1}^{[l]}, \ldots, \cf^{[l]}_m$ on $\Quot^{l}_{S}(\ce)$, and the virtual tangent bundle $T^{\vir}_{\Quot^{l}_{S}(\ce)}$.
Then the integral $$\int_{[\Quot^{l}_{S}(\ce)]^{\vir}} P$$ is given by a polynomial, with coefficients depending on $\ce,\cf_{1},\ldots,\cf_{m}$ only through their ranks,
in the intersection numbers $$c_{1}(\ce\otimes\cf_{1})c_{1}(S), \ldots, c_{1}(\ce\otimes\cf_{m})c_{1}(S), c_{1}(\ce)c_{1}(S), c_{1}(S)^{2}.$$
\end{theorem}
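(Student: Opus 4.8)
The plan is to use \Cref{canonical} to trade the virtual integral over $\Quot^{l}_{S}(\ce)$ for an honest integral over the smooth curve Quot scheme $\Quot^{l}_{C}(\ce\vert_{C})$, to prove a universality statement for the latter, and finally to translate the numerical invariants of $C$ and the restricted bundles back into intersection numbers on $S$ by adjunction. Assume first that $S$ carries a smooth canonical curve $C$, i.e. the zero scheme of a nonzero $\omega\in\H^{0}(\Omega^{2}_{S})$. Then \Cref{canonical} gives
\begin{equation*}
\int_{[\Quot^{l}_{S}(\ce)]^{\vir}}P=(-1)^{l}\int_{\Quot^{l}_{C}(\ce\vert_{C})}\iota^{\ast}P.
\end{equation*}
I would first check that $\iota^{\ast}\cf_{i}^{[l]}$ is the tautological sheaf of $\cf_{i}\vert_{C}$ on $\Quot^{l}_{C}(\ce\vert_{C})$, and that $\iota^{\ast}T^{\vir}_{\Quot^{l}_{S}(\ce)}$ is expressed, via the obstruction theory and the cosection $\sigma(\omega)$, through the honest tangent bundle of $\Quot^{l}_{C}(\ce\vert_{C})$ together with tautological classes of $\ce\vert_{C}$; thus $\iota^{\ast}P$ is again a tautological polynomial on the curve Quot scheme.

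It then remains to understand tautological integrals over the smooth projective scheme $\Quot^{l}_{C}(F)$, where $C$ is a smooth curve and $F,\cf_{i}\vert_{C}$ are bundles on $C$. I would show that any such integral is a universal polynomial, with coefficients depending on $F$ and the $\cf_{i}\vert_{C}$ only through their ranks, in the genus $g(C)$ and the degrees $\deg F,\deg(\cf_{i}\vert_{C})$. The natural tool is the bordism method of Ellingsrud--Göttsche--Lehn: tautological integrals are multiplicative and invariant under deformation of the data $(C,F,\{\cf_{i}\vert_{C}\})$, and for a curve the only invariants of such data are the genus and the degrees. Adjunction now matches these to the four intersection numbers in the statement. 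Since $[C]=-c_{1}(S)$, one has $g(C)-1=c_{1}(S)^{2}$, $\deg(\ce\vert_{C})=-c_{1}(\ce)c_{1}(S)$ and $\deg(\cf_{i}\vert_{C})=-c_{1}(\cf_{i})c_{1}(S)$; using $c_{1}(\ce\otimes\cf_{i})=\operatorname{rk}(\cf_{i})\,c_{1}(\ce)+\operatorname{rk}(\ce)\,c_{1}(\cf_{i})$ the degrees are, up to the rank-dependent coefficients the statement allows, exactly $c_{1}(\ce\otimes\cf_{i})c_{1}(S)$ and $c_{1}(\ce)c_{1}(S)$. This proves the theorem for every $S$ admitting a smooth canonical curve. \Cref{lf}, valid on an arbitrary surface, enters to pin down the dependence on $\ce$: it lets one pass between locally free sheaves of different ranks at the cost of a tautological Euler class, and together with the twisting isomorphism \ref{two} it confines the $\ce$-dependence to $\operatorname{rk}(\ce)$ and $c_{1}(\ce)c_{1}(S)$, which also underlies the corollary $\chi^{\vir}(\Quot^{l}_{S}(\ce))=\chi^{\vir}(\Quot^{l}_{S}(\co^{\oplus r}))$.

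The remaining, and in my view hardest, step is to remove the hypothesis that $S$ has a smooth canonical curve, since \Cref{canonical} applies only in that case and such surfaces are not deformation equivalent to arbitrary ones. Here I would first establish, again by the Ellingsrud--Göttsche--Lehn bordism argument now carried out on the surface, that $\int_{[\Quot^{l}_{S}(\ce)]^{\vir}}P$ is \emph{some} universal polynomial $\Phi$ in the full list of Chern numbers of $(S,\ce,\cf_{1},\ldots,\cf_{m})$, and then show that $\Phi$ involves only the asserted four combinations. To eliminate the bundle Chern numbers $c_{1}(\cf_{i})^{2}$, $c_{1}(\cf_{i})c_{1}(\cf_{j})$, $c_{1}(\ce)c_{1}(\cf_{i})$, $c_{1}(\ce)^{2}$, $c_{2}(\cf_{i})$, $c_{2}(\ce)$, I would fix a surface $S$ with a smooth canonical curve and Picard number at least two and vary the bundles: by the Hodge index theorem their first Chern classes can be moved with $c_{1}(\,\cdot\,)c_{1}(S)$ held fixed while the self- and cross-intersections change, yet the curve reduction shows the integral is unchanged, so these monomials cannot occur in $\Phi$. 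To eliminate $c_{2}(S)$ I would vary $S$ itself over surfaces with a smooth canonical curve, whose Chern pairs $(c_{1}(S)^{2},c_{2}(S))$ fill a two-dimensional, hence Zariski-dense, region on which the integral is a function of $c_{1}(S)^{2}$ alone.

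I expect this last interpolation to be the main obstacle: one must produce enough surfaces with smooth canonical curves---for instance smooth canonical models, products of curves, and relatively minimal elliptic surfaces with $\chi(\co_{S})$ large---realizing a Zariski-dense set of the relevant invariants, and verify that no universal relation among Chern numbers (such as Noether's formula) obstructs passing from vanishing on these examples to the vanishing of the corresponding coefficients of $\Phi$. Once this is secured, $\Phi$ depends only on $c_{1}(\ce\otimes\cf_{i})c_{1}(S)$, $c_{1}(\ce)c_{1}(S)$ and $c_{1}(S)^{2}$ with coefficients determined by the ranks, which is the assertion.
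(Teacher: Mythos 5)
Your overall skeleton --- reduce to a canonical curve via \Cref{canonical}, prove a genus/degree universality on the curve Quot scheme, translate by adjunction, and then interpolate to arbitrary surfaces through a universal polynomial $\Phi$ in all Chern numbers --- is the paper's strategy run in the opposite order, and most of your individual computations (the adjunction identities, the linear substitution relating $c_{1}(\cf_{i})c_{1}(S)$ to $c_{1}(\ce\otimes\cf_{i})c_{1}(S)$, the Hodge-index variation killing monomials such as $c_{1}(\ce)^{2}$, $c_{1}(\cf_{i})c_{1}(\cf_{j})$, $c_{2}(\ce)$) are correct. The genuine gap is the step you dispose of in one sentence: the existence of $\Phi$. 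You invoke ``the Ellingsrud--G\"ottsche--Lehn bordism argument now carried out on the surface,'' but there is no such argument for virtual integrals over higher-rank Quot schemes: the theorem of \cite{EGL}, and its inductive proof via incidence Hilbert schemes, concerns tautological integrals over $S^{[l]}$ against the honest fundamental class, and one cannot substitute deformation invariance of virtual classes for cobordism invariance (surfaces with equal Chern numbers need not be deformation equivalent). The paper manufactures $\Phi$ by a concrete chain of reductions: use \ref{two} to make $\ce$ globally generated, choose an exact sequence $0\rightarrow\ck\rightarrow\co^{\oplus n}\rightarrow\ce\rightarrow 0$, apply \Cref{lf} together with \Cref{compatibility} and (\ref{eq:secondexact}) to rewrite the integral over $[\Quot^{l}_{S}(\co^{\oplus n})]^{\vir}$, and then --- the ingredient entirely absent from your proposal --- use the torus action \ref{three} and the virtual localization formula of \cite{GP} to reduce to products of Hilbert schemes, whose virtual class is $(-1)^{l}e(\Omega^{2[l]}_{S})\cap[S^{[l]}]$ by \Cref{smooth}; only at that point does \cite{EGL} apply. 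Without this reduction your second half has nothing to interpolate: $\Phi$ must exist for every surface, including $\bp_{2}$ and other surfaces with $p_{g}=0$, which carry no $2$-forms and are invisible to \Cref{canonical}. The same omission affects your curve-level step: deformation invariance of tuples $(C,F,\cf_{i}\vert_{C})$ shows the integral is a \emph{function} of the genus, ranks and degrees, whereas your interpolation needs a \emph{polynomial}; the paper obtains polynomiality by running the same reduction (reduce to a trivial bundle and localize to symmetric products) on $C$, which is the ``mutatis mutandis'' step of its proof.

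Two smaller points. First, your shortcut around the paper's final combinatorial step is legitimate for the literal statement: since $c_{1}(\ce\otimes\cf_{i})c_{1}(S)=f_{i}\,c_{1}(\ce)c_{1}(S)+r\,c_{1}(\cf_{i})c_{1}(S)$, where $f_{i}$ and $r$ are the ranks, is an invertible rank-dependent linear substitution, any polynomial in the degrees is automatically one in the asserted variables. But it is strictly weaker than what the paper proves: the relations among the coefficients $n_{i,j,k}$ obtained from twisting on the curve show that the answer is a polynomial in the $c_{1}(\ce\otimes\cf_{i})c_{1}(S)$ and $c_{1}(S)^{2}$ alone, with no separate $c_{1}(\ce)c_{1}(S)$ dependence, and this finer structure is what the symmetry of the Segre series (\Cref{Segre}) actually requires. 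Second, the interpolation difficulty you flag at the end is real, but it is the same issue the paper compresses into the word ``abundance'' (\Cref{abundance}); given multiplicativity under disjoint union and the supply of complete intersections and blow-ups, it is not the main obstacle --- the main obstacle is producing $\Phi$ in the first place.
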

This is a virtual, higher rank analogue of a result of Ellingsrud, G\"ottsche, and Lehn \cite{EGL}, which has proven to be very useful in a number of contexts. The idea of the proof is as follows: by \ref{two} we can assume that $\ce$ is globally generated; \Cref{lf} allows us reduce to the case $\ce=\co^{\oplus r}$, in which case we can use \ref{three}, torus localization and the result of \cite{EGL}. Finally, we use \Cref{canonical} to eliminate second Chern classes, and \ref{two} to obtain the dependence through the tensor products.
Combined with the work of Oprea and Pandharipande, our theorems lead to a fairly complete understanding of the virtual intersection theory of $\Quot^{l}_{S}(\ce)$.
As an application of \Cref{universalityintro}, we prove the equality of virtual Euler characteristics
\begin{equation}\label{eq:euler}
\chi^{\vir}(\Quot^{l}_{S}(\ce))=\chi^{\vir}(\Quot^{l}_{S}(\co^{\oplus r})),
\end{equation}
using \ref{two}. The computation of $\chi^{\vir}(\Quot^{l}_{S}(\co^{\oplus r}))$ is one of the main results of \cite{OP}. We also deduce the symmetry property
\begin{equation}\label{eq:symmetry}
S_{\ce}^{\cf}=S^{\ce}_{\cf}
\end{equation}
of the virtual Segre series
$$S_{\ce}^{\cf}(q)=\sum_{l=0}^{\infty} (-1)^{lr}q^{l}\int_{[\Quot^{l}_{S}(\ce)]^{\vir}} s(\cf^{[l]})$$
from \Cref{universalityintro} and the result of \cite{AJLOP} in the $\ce=\co^{\oplus r}$ case. As shown in \cite{Bojko}, (\ref{eq:euler}) and (\ref{eq:symmetry})
can also be deduced from Joyce's wall-crossing formula \cite{Joyce}. However, our approach is more geometric and natural, and does not rely on the deep results of
\cite{Joyce}.  We also show that for any invertible sheaf $\cl$ on $S$
 \begin{equation*}
\sum_{l=0}^{\infty}q^{l}\int_{[\Quot^{l}_{S}(\ce)]^{\vir}}e(\cl^{[l]})^{r}=\left(\frac{1}{1-q}\right)^{c_{1}(\ce\otimes\cl)c_{1}(S)},
 \end{equation*}
 using \Cref{universalityintro} and \Cref{lf}.
 
\section{Cosection localization}

\subsection{Localized Gysin map}\label{gysindef}
Consider a scheme $X$, and a cosection $\sigma:\cf\rightarrow\co$ of a locally free sheaf $\cf$ on $X$. Let $Z(\sigma)$ denote the zero scheme of $\sigma$, and $F\rightarrow X$
the vector bundle associated to $\cf$, and write $F(\sigma)$ for the union $F\vert_{Z(\sigma)}\cup\Ker(\sigma)$, endowed with the reduced subscheme structure.
\begin{theorem}[Kiem-Li \cite{KL}]
	There exists  a $\sigma$-localized Gysin map
	\begin{equation*}
		0^{!}_{\sigma}:A_{\ast}(F(\sigma))\rightarrow A_{\ast}(Z(\sigma))
	\end{equation*}
	which is compatible with the usual Gysin map $0^{!}_{F}$ of $F$,
	in the sense that the diagram
	\begin{equation}\label{eq:compatibility}
		\begin{tikzcd}
			A_{\ast}(F(\sigma)) \arrow[r] \arrow[d, swap, "0^{!}_{\sigma}"] & A_{\ast}(F) \arrow[d, "0^{!}_{F}"]  \\
			A_{\ast}(Z(\sigma)) \arrow[r] & A_{\ast}(X)
		\end{tikzcd}
	\end{equation}
	commutes.
\end{theorem}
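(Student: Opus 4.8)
The plan is to follow the strategy of Kiem and Li, constructing $0^{!}_{\sigma}$ so that it agrees with the ordinary Gysin map of $F\vert_{Z(\sigma)}$ on classes supported over $Z(\sigma)$ and vanishes over the open complement $U=X\setminus Z(\sigma)$. Write $r$ for the rank of $\cf$ and $\rho\colon F\to X$ for the projection. Over $U$ the cosection is surjective, so $\Ker(\sigma)\vert_{U}$ is a subbundle $F'$ of corank one, and $F(\sigma)$ decomposes into the closed piece $F\vert_{Z(\sigma)}$ together with its open complement $F'$; this gives the right-exact localization sequence $A_{\ast}(F\vert_{Z(\sigma)})\xrightarrow{j_{\ast}}A_{\ast}(F(\sigma))\to A_{\ast}(F')\to 0$. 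The desired map should factor the class of interest through $A_{\ast}(F\vert_{Z(\sigma)})$ and then apply the ordinary Gysin map $A_{\ast}(F\vert_{Z(\sigma)})\to A_{\ast}(Z(\sigma))$; over $U$ the target $A_{\ast}(Z(\sigma)\cap U)$ vanishes, forcing the map to be zero there. First I would record these two local models precisely, since they pin down $0^{!}_{\sigma}$ uniquely and dictate the compatibility that must ultimately be verified.

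The main construction, and the principal obstacle, is to make this factorization well defined, since the sequence above is not left exact: the naive prescription ``keep only the part of a cycle lying over $Z(\sigma)$'' is ambiguous because the component $F\vert_{Z(\sigma)}$ and the closure of $F'$ overlap over $Z(\sigma)$. To resolve this I would pass, as in Kiem--Li, to a blow-up $\pi\colon\tilde F\to F$ with center supported over $Z(\sigma)$ that separates $F\vert_{Z(\sigma)}$ from the closure of $F'$ and realises the relevant loci as Cartier divisors; on $\tilde F$ the refined intersection with such a divisor is canonically defined, and one sets $0^{!}_{\sigma}(\alpha)$ by pulling $\alpha$ back, intersecting, and pushing the result forward to $Z(\sigma)$. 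The technical heart of the argument is then to check that the resulting class is independent of the chosen representative cycle and of the blow-up, i.e. that the construction descends to rational equivalence; this is where I expect the bulk of the work to lie and where the estimates will be most delicate.

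Finally I would establish the compatibility square. The key computation is that $0^{!}_{F}(\theta_{\ast}\alpha)$ is supported on $Z(\sigma)$, where $\theta\colon F(\sigma)\hookrightarrow F$ denotes the inclusion. Indeed, its restriction to $U$ is $0^{!}_{F\vert_{U}}$ applied to the pushforward of a class on $F'$; by homotopy invariance that class is pulled back from $U$, and since $F'$ is cut out of $F\vert_{U}$ by the fibrewise-linear function $\tilde\sigma\colon F\to\mathbb{A}^{1}$ induced by $\sigma$, it is the divisor $\tilde\sigma^{\ast}[0]$. As $[0]$ is rationally trivial on $\mathbb{A}^{1}$ the line bundle $\co_{F}(F')$ is trivial over $U$, so the projection formula makes this pushforward vanish in $A_{\ast}(F\vert_{U})$, and hence its Gysin image is zero. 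Thus $0^{!}_{F}(\theta_{\ast}\alpha)$ lifts to $A_{\ast}(Z(\sigma))$, and over $Z(\sigma)$ both $0^{!}_{F}$ and the blow-up construction reduce to the ordinary Gysin map of $F\vert_{Z(\sigma)}$; comparing them via functoriality of Gysin and divisorial pullbacks shows that the pushforward of $0^{!}_{\sigma}(\alpha)$ to $A_{\ast}(X)$ equals $0^{!}_{F}(\theta_{\ast}\alpha)$, which is exactly the commutativity of \eqref{eq:compatibility}.
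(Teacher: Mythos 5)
Your plan follows the same architecture as Kiem--Li's construction (which is what the paper recounts; it does not reprove the theorem, it cites \cite{KL}), and one ingredient of your third paragraph is genuinely correct and important: since $\Ker(\sigma)\vert_{U}$ is the zero divisor of the fibrewise-linear function $\tilde\sigma$ on $F\vert_{U}$, hence a \emph{principal} Cartier divisor, homotopy invariance plus the self-intersection/projection formula show that every class pushed forward from $\Ker(\sigma)\vert_{U}$ dies in $A_{\ast}(F\vert_{U})$; therefore $0^{!}_{F}(\theta_{\ast}\alpha)$ restricts to zero on $U$ and lifts (non-canonically) to $A_{\ast}(Z(\sigma))$. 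This is the correct reason a localized class can exist at all.

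However, there are concrete gaps between this plan and a proof. First, the construction is never actually pinned down, and what you describe is not the one that works: Kiem and Li do not blow up $F$, they blow up $X$ along $Z(\sigma)$, set $\tilde F=\rho^{\ast}F$, and the essential object is the vector bundle $K$ on $\tilde X$ associated to the kernel of $\rho^{\ast}\cf\rightarrow\co_{\tilde X}(-D)$, which extends $\Ker(\sigma)$ across the exceptional divisor $D$. The geometric fact making the definition possible is that the proper transform $\tilde C$ of any integral $C\subset F(\sigma)$ not contained in $F\vert_{Z(\sigma)}$ lies inside $K$, and one defines $0^{!}_{\sigma}[C]=\rho_{D\ast}(-i^{!}0^{!}_{K}[\tilde C])$. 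Your prescription (``pull $\alpha$ back, intersect with a Cartier divisor, push forward'') is not yet a definition: $\rho$ is not flat, so classes do not pull back, and one is forced to work with proper transforms of \emph{cycles} --- which is precisely why descent to rational equivalence becomes the central issue. Second, that descent is the technical heart of \cite{KL}, and you explicitly defer it; without it there is no map $0^{!}_{\sigma}$ at all. Third, the compatibility square is asserted rather than verified; the actual check uses $\tilde C\subset K$ via the excess-intersection identity $0^{!}_{\tilde F}[\tilde C]=c_{1}(\co_{\tilde X}(-D))\cap 0^{!}_{K}[\tilde C]=-i_{\ast}\bigl(i^{!}0^{!}_{K}[\tilde C]\bigr)$ together with commutation of Gysin maps with proper pushforward ($\rho_{\ast}0^{!}_{\tilde F}[\tilde C]=0^{!}_{F}[C]$), and none of this can be run without having identified $K$. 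Finally, a smaller but real error: your claim that the two ``local models'' pin down $0^{!}_{\sigma}$ uniquely is false --- the localization sequence is only right exact, the vanishing condition over $U$ is vacuous (its target $A_{\ast}(Z(\sigma)\cap U)$ is zero), and the first condition constrains the map only on the image of $A_{\ast}(F\vert_{Z(\sigma)})\rightarrow A_{\ast}(F(\sigma))$, not on classes restricting nontrivially to $\Ker(\sigma)\vert_{U}$.
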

More precisely, Kiem and Li construct $0^{!}_{\sigma}$ as follows. If $C\subset F(\sigma)$ is a closed integral subscheme, 
then either $C$ is contained in $F\vert_{Z(\sigma)}$ or not; in the former case one simply puts 
\begin{equation*}
	0^{!}_{\sigma}([C])=0^{!}_{F\vert_{Z(\sigma)}}([C]).
\end{equation*}
In the latter case one considers the blow up $\tilde{X}=\mathrm{Bl}_{Z(\sigma)} X$, forms the Cartesian diagram 
\begin{equation}
	\begin{tikzcd}
		\tilde{F} \arrow[r] \arrow[d] & F \arrow[d] \\
		\tilde{X}  \arrow[r, swap, "\rho"] & X
	\end{tikzcd}
\end{equation}
and defines $\tilde{C}$ to be the proper transform of $C$. Let $\ci$ be the ideal sheaf of $Z(\sigma)$ in $X$, 
and write $D=\rho^{-1}(Z(\sigma))$ for the exceptional divisor of $\rho$, with inclusion $i:D\rightarrow\tilde{X}$.
Moreover, let $K$ be the vector bundle associated to the kernel of the morphism of sheaves
\begin{equation*}
	\rho^{\ast} \cf\xrightarrow{\rho^{\ast} \sigma}\rho^{\ast}\ci\rightarrow\ci_{D}=\co_{\tilde{X}}(-D).
\end{equation*}
The proper transform $\tilde{C}$ is a closed integral subscheme of $K$,
and $0^{!}_{\sigma}([C])$ is defined to be the image of $[\tilde{C}]$ under the composition
\begin{equation*}
	A_\ast(K)\xrightarrow{0_{K}^{!}}A_{\ast}(\tilde{X})\xrightarrow{-i^{!}}A_{\ast}(D)\xrightarrow{\rho_{D\ast}}A_{\ast}(Z(\sigma)).
\end{equation*}

\begin{remark}\label{Gysincompatibility}
	It is clear that the $\sigma$-localized Gysin map is compatible with open immersions $U\rightarrow X$,
	in the sense that the diagram 
	\begin{equation*}
		\begin{tikzcd}
			A_{\ast}(F(\sigma)) \arrow[r] \arrow[d] & A_{\ast}(F\vert_{U}(\sigma\vert_{U})) \arrow[d]   \\
			A_{\ast}(Z(\sigma)) \arrow[r] & A_{\ast}(Z(\sigma\vert_{U}))
		\end{tikzcd}
	\end{equation*}
	is commutative.
\end{remark}

\subsection{Localized virtual fundamental class}\label{localiseddef}

Let $X$ be a projective scheme with cotangent complex $L_{X}$. Recall \cite{BF} that a perfect obstruction theory on $X$ is a morphism
$\phi:E\rightarrow L_{X}$ in the derived category of $X$, where $E$ is a perfect $2$-term complex concentrated in degrees $-1$ and $0$,
such that $h^{0}(\phi)$ is an isomorphism and $h^{-1}(\phi)$ is surjective. We write $\co b=h^{1}(E^{\vee})$ for the obstruction sheaf,
and $v=\mathrm{rk}(E)$ for the virtual dimension. Behrend and Fantechi show that any perfect obstruction theory on $X$ yields a virtual fundamental
class on $X$, which can be constructed as follows \cite{GP}. If $[\cf^{-1}\rightarrow\cf^0]$ is a global resolution of $E$, then by considering the mapping cone of (a representative of) the truncation
\begin{equation*}
	E\rightarrow L_{X}\rightarrow\tau^{\geq-1}L_{X},
\end{equation*}
one obtains a closed subcone $C\subset F$, where $F$ denotes the vector bundle on $X$ associated to $(\cf^{-1})^{\vee}$.
The virtual fundamental class of $X$ is defined by
$$[X]^{\vir}=0^{!}_{F}[C]\in A_{v}(X),$$
which is independent of the choice of resolution of $E$.
\begin{example}\label{smooth}
If $X$ is smooth, then $[X]^{\vir}=e(\co b)\cap[X]$.
\end{example}
Consider now a cosection  $\sigma:\co b\rightarrow\co$ of the obstruction sheaf, and denote by $\iota:Z(\sigma)\rightarrow X$ the inclusion of the zero scheme of $\sigma$. By abuse of notation, we also write $\sigma$ for the induced cosection $(\cf^{-1})^{\vee}\rightarrow\co b\rightarrow\co$. Kiem and Li (\cite{KL}, Corollary 4.6) prove that cone $C\subset F$ is in fact  set theoretically contained in $F(\sigma)$, 
and therefore defines a class $[C]\in A_{\ast}(F(\sigma))$. Kiem-Li's $\sigma$-localized virtual fundamental class of $X$ is then given by
$$[X]^{\vir}_{\sigma}=0^{!}_{\sigma}[C]\in A_{v}(Z(\sigma)).$$
The diagram (\ref{eq:compatibility}) shows that  $[X]^{\vir}=\iota_{\ast}[X]^{\vir}_{\sigma}$.
\begin{remark}\label{open}
	Let $U\subset X$ be an open subscheme, with the induced perfect obstruction theory. Then the restriction map $A_{v}(Z(\sigma))\rightarrow A_{v}(Z(\sigma\vert_{U}))$
	takes $[X]^{\vir}_{\sigma}$ to $[U]^{\vir}_{\sigma\vert_{U}}$. Indeed, the cone $C\subset F$ over $X$ restricts to the corresponding cone over $U$, and so this
	follows from \Cref{Gysincompatibility}.
\end{remark}

\subsection{Explicit computations}
We now describe a situation in which the cosection localized virtual fundamental class can be computed explicitly.
In the following, we write $X^{0}$ for the smooth locus of a scheme $X$.

\begin{proposition}\label{explicit}
	Let $X$ be a projective scheme with a perfect obstruction theory of virtual dimension $v$,
	and let $\sigma:\co b\rightarrow\co$ be a cosection of the obstruction sheaf. Assume that the zero scheme $Z(\sigma)$ is irreducible of dimension $v$,
	and that $Z(\sigma)^{0}\cap X^{0}$ is nonempty. Then
	$$[X]^{\vir}_{\sigma}=(-1)^c [Z(\sigma)] \quad \mathrm{in} \quad A_{v}(Z(\sigma)),$$
	where $c$ is the codimension of $Z(\sigma)$ in $X$.
\end{proposition}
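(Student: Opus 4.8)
The plan is to reduce the statement to a local computation on the smooth locus, and then to evaluate the Kiem--Li localized Gysin map explicitly on a blow-up. Since $Z(\sigma)$ is irreducible of dimension $v$, the group $A_{v}(Z(\sigma))$ is freely generated by $[Z(\sigma)]$, so $[X]^{\vir}_{\sigma}=n\,[Z(\sigma)]$ for a unique integer $n$, and it suffices to show $n=(-1)^{c}$. By hypothesis $Z(\sigma)^{0}\cap X^{0}$ is nonempty; I choose an affine open $U\subset X^{0}$ meeting it, small enough that $Z(\sigma)\cap U$ is smooth. Then $U$ is smooth, $\co b\vert_{U}$ is locally free (\Cref{smooth}), and $Z(\sigma\vert_{U})=Z(\sigma)\cap U$ is smooth of codimension $c=\operatorname{rk}\co b$. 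By \Cref{open} the restriction map $A_{v}(Z(\sigma))\to A_{v}(Z(\sigma\vert_{U}))$ sends $[X]^{\vir}_{\sigma}$ to $[U]^{\vir}_{\sigma\vert_{U}}$, and since $Z(\sigma)\cap U$ is dense in the irreducible $Z(\sigma)$ this map is an isomorphism carrying $[Z(\sigma)]$ to $[Z(\sigma\vert_{U})]$. Hence it is enough to prove the formula on $U$, and I may rename $U$ to $X$ and assume throughout that $X$ is smooth and affine, that $\co b$ is locally free of rank $c$, and that $Z(\sigma)$ is smooth of codimension $c$.

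Over such an $X$, which I take to be affine, the perfect obstruction theory splits and admits the minimal global resolution $E^{\vee}=[T_{X}\xrightarrow{0}\co b]$; thus $F$ is the total space of $\co b$ and the cone $C\subset F$ is the zero section, consistent with \Cref{smooth}. The induced cosection of $F$ is just $\sigma$, now a regular section of $\co b^{\vee}$ with zero scheme $Z(\sigma)$. Writing $\cn=\cn_{Z(\sigma)/X}$, the derivative of this section identifies $\co b\vert_{Z(\sigma)}\cong\cn^{\vee}$, compatibly with $\sigma$ in the sense that the composite $\co b\xrightarrow{\sigma}\ci\to\ci/\ci^{2}=\cn^{\vee}$ is this isomorphism. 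These identifications reduce the statement to an explicit evaluation of $0^{!}_{\sigma}$ on the zero-section class $[C]$.

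I carry out this evaluation using the Kiem--Li construction recalled above. Blowing up $Z(\sigma)$ gives $\rho:\tilde{X}\to X$ with exceptional divisor $D=\mathbb{P}(\cn)$, and since the center is smooth $\co_{\tilde X}(-D)\vert_{D}\cong\co_{\mathbb{P}(\cn)}(1)$, with $\xi:=c_{1}$ the relative hyperplane class. The proper transform of the zero section is the zero section of $\rho^{\ast}\co b$, which lies in $K$, so $\tilde C$ is the zero section of $K$ and $0^{!}_{K}[\tilde C]=e(K)\cap[\tilde X]$. Applying $-i^{!}$ yields $-e(K\vert_{D})\cap[D]$. Over $D$ the defining sequence of $K$ becomes, via $\co b\vert_{Z(\sigma)}\cong\cn^{\vee}$, the dual of the tautological sequence $0\to\co(-1)\to\rho_{D}^{\ast}\cn\to Q\to0$, so $K\vert_{D}\cong Q^{\vee}$ has rank $c-1$ and $e(K\vert_{D})=c_{c-1}(Q^{\vee})$. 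Expanding $c(Q^{\vee})=\rho_{D}^{\ast}c(\cn^{\vee})/(1+\xi)$ and using the projective bundle pushforward $\rho_{D\ast}(\xi^{j}\cap[D])=0$ for $j<c-1$ together with $\rho_{D\ast}(\xi^{c-1}\cap[D])=[Z(\sigma)]$, only the top term survives and gives $\rho_{D\ast}(e(K\vert_{D})\cap[D])=(-1)^{c-1}[Z(\sigma)]$. Combined with the sign in $-i^{!}$ this produces $0^{!}_{\sigma}[C]=(-1)^{c}[Z(\sigma)]$, so $n=(-1)^{c}$.

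The conceptual steps—the reduction to the smooth locus and the identification of the cone with the zero section—are routine once \Cref{open} and \Cref{smooth} are in hand. I expect the main work to lie in the blow-up computation of the third paragraph: one must follow the tautological sequence on $D=\mathbb{P}(\cn)$ through the Kiem--Li kernel bundle $K$ and pin down the sign, where the factor $(-1)^{c-1}$ from the Segre-class pushforward combines with the $-1$ built into the localized Gysin map to yield the claimed $(-1)^{c}$. Keeping the conventions for $\mathbb{P}(\cn)$ and for $\co_{\tilde X}(-D)\vert_{D}$ straight is the only genuinely delicate point.
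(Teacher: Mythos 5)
Your proof is correct and follows essentially the same route as the paper: first write $[X]^{\vir}_{\sigma}=n[Z(\sigma)]$, reduce via \Cref{open} to the case where both $X$ and $Z(\sigma)$ are smooth, and then evaluate the Kiem--Li localized Gysin map explicitly on the blow-up of $Z(\sigma)$, where the tautological sequence on $D=\bp(\cn_{Z(\sigma)/X})$ and the Segre-class pushforward produce exactly the sign $(-1)^{c}$. The only real difference is that you pass to an affine open and use the split resolution $E^{\vee}\simeq[T_{X}\xrightarrow{0}\co b]$, so that the cone becomes the zero section --- this tacitly invokes Kiem--Li's independence of the localized class from the choice of resolution --- whereas the paper computes with an arbitrary (restricted) resolution, in which the cone is a subbundle $\tilde{C}\subset K$, and thereby obtains along the way the slightly more general formula $[X]^{\vir}_{\sigma}=(-1)^{c}e(\Ker\sigma\vert_{Z(\sigma)})\cap[Z(\sigma)]$ without the dimension hypothesis.
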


\begin{proof}
	Since $Z(\sigma)$ is irreducible of dimension $v$, we have 
	\begin{equation}\label{eq:multiplicity}
		[X]^{\vir}_{\sigma}=n[Z(\sigma)]
	\end{equation}
	for some integer $n$. Let $\sigma^{0}$ be the restriction of $\sigma$ to the smooth locus $X^{0}$ of $X$.
	By assumption $Z(\sigma^{0})=Z(\sigma)\cap X^{0}$ is nonempty, and applying \Cref{open} to (\ref{eq:multiplicity}) gives
	\begin{equation*}
		[X^{0}]^{\vir}_{\sigma^{0}}=n[Z(\sigma^{0})] \quad \mathrm{in} \quad A_{v}(Z(\sigma^{0})),
	\end{equation*}
	In fact, since $Z(\sigma)^{0}\cap X^{0}$ is nonempty, we can also replace $Z(\sigma)$ by $Z(\sigma)^{0}$, and thus assume that both $Z(\sigma)$ and $X$ are smooth.
	In this situation we show, by an explicit computation, that the coefficient in (\ref{eq:multiplicity}) is
	$$n=(-1)^c,$$
	where $c$ is the codimension of $Z(\sigma)$ in $X$.
	In fact more generally, dropping the assumption that the dimension of $Z(\sigma)$ is the virtual dimension of $X$, we show that 
	\begin{equation}\label{eq:smoothcase}
		[X]^{\vir}_{\sigma}= (-1)^{c}e(\Ker\sigma\vert_{Z(\sigma)})\cap [Z(\sigma)] \quad \mathrm{in} \quad A_{v}(Z(\sigma)),
	\end{equation}
	where $c$ is the codimension of $Z(\sigma)$ in $X$. Since $X$ is smooth, the obstruction sheaf $\co b$ is locally free of rank $l=\dim(X)-v$,
	and the kernel of $\sigma\vert_{Z(\sigma)}$ is locally free of rank $\dim Z(\sigma)-v$. In the notation of \ref{localiseddef}, the cone $C\subset F$
	is the the vector bundle associated to the kernel of the surjection $(\cf^{-1})^{\vee}\rightarrow\co b$. We now consider
        the construction of the $\sigma$-localized Gysin map, which we have described in detail in \ref{gysindef}. In the notation of \ref{gysindef},
         $\tilde{C}$ is in our situation a subbundle of $K$, so 
	\begin{equation*}
		0^{!}_{K} [\tilde{C}]=\left\{c(K)\cap s(\tilde{C})\right\}_{v+1}=c_{l-1}(K/\tilde{C})\cap [\tilde{X}]
	\end{equation*}
	by a standard result in intersection theory \cite{Fulton}, and in particular
	$$-\iota^\ast 0^{!}_{K}[\tilde{C}]=-c_{l-1}((K/\tilde{C})\vert_D)\cap[D].$$
	The inclusion $K\rightarrow\tilde{F}$ induces a morphism of exact sequences
	\begin{equation*}
		\begin{tikzcd}
			0\arrow[r] & \tilde{C}\vert_D \arrow[r] \arrow[d] & K\vert_D \arrow[r] \arrow[d] & (K/\tilde{C})\vert_D \arrow[r] \arrow[d] & 0  \\
			0\arrow[r] & \tilde{F}\vert_D \arrow[r] & \tilde{F}\vert_D \arrow[r] & 0 \arrow[r] & 0
		\end{tikzcd}
	\end{equation*}
	whose associated exact sequence of kernels and cokernels is of the form
	\begin{equation*}
		0\rightarrow (K/\tilde{C})\vert_D\rightarrow \rho_{D}^\ast(\co b\vert_{Z(\sigma)})\rightarrow\co_D(1)\rightarrow 0,
	\end{equation*}
	since $C$ is the the vector bundle associated to the kernel of $(\cf^{-1})^{\vee}\rightarrow\co b$, $\tilde{C}=\rho^{\ast}C\subset\tilde{F}=\rho^{\ast}F$, and
        $\co_{D}(1)=\co_{\tilde{X}}(-D)\vert_{D}$. Hence
	\begin{equation*}
		-\iota^\ast 0^{!}_{K} [\tilde{C}] =-\left\lbrace c(\rho_{D}^\ast(\co b\vert_{Z(\sigma)}))\cap s(\co_D(1))\right\rbrace_{l-1}\cap[D].
	\end{equation*}
	By the equation 
	\begin{equation*}
		\sum_{i\geqslant 0} \rho_{D\ast}(c_1(\co_{D}(1))^{i}\cap[D])=s(N_{Z(\sigma)/X})\cap[Z(\sigma)]
	\end{equation*}
	and the projection formula, we obtain
	\begin{align*}
		[X]^{\vir}_{\sigma} &= \sum_{i=0}^{l-1} (-1)^{i+1} c_{l-1-i}(\co b\vert_{Z(\sigma)})\cap \rho_{D\ast}(c_1(\co_D(1))^{i}\cap[D]) \\
		&=(-1)^{c}\left(\sum_{j=0}^{l-c} c_{l-c-j}(\co b\vert_{Z(\sigma)})(-1)^{j} s_{j}(N_{Z(\sigma)/X})\right) \cap[Z(\sigma)] \\
		&=(-1)^{c}e(\Ker\sigma\vert_{Z(\sigma)})\cap [Z(\sigma)]. 
	\end{align*}
      \end{proof}
      
\section{Virtual intersection theory}

\subsection{General properties}
Let $\ce$ be a coherent sheaf on a smooth projective surface $S$.
The Quot scheme $\Quot^{l}_{S}(\ce)$ is the moduli space of length $l$ coherent sheaf quotients of $\ce$. A point $q$ of $\Quot^{l}_{S}(\ce)$ thus corresponds to a quotient
\begin{equation*}
\ce\rightarrow\cq_{q} \quad \mathrm{on} \quad S, \quad \mathrm{with} \quad \dim\cq_{q}=0 \quad \mathrm{and} \quad h^{0}(\cq_{q})=l.
\end{equation*}
It satisfies the following basic properties \cite{Grothendieck}:
\begin{enumerate}[label=(\Roman*)]
	\item a quotient $\ce\rightarrow\ce^{''}$ induces a closed immersion \label{onea} $$\iota:\Quot^{l}_{S}(\ce^{''})\rightarrow\Quot^{l}_{S}(\ce),$$ 
	\item an invertible sheaf $\cl$ on $S$ induces an isomorphism  \label{twoa} $$\Quot^{l}_{S}(\ce)\xrightarrow{\sim}\Quot^{l}_{S}(\ce\otimes\cl),$$
	\item the automorphism group of $\ce$ acts on $\Quot^{l}_{S}(\ce)$.  \label{threea}
\end{enumerate}
We denote by 
$$\pi:S\times\Quot^{l}_{S}(\ce)\rightarrow\Quot^{l}_{S}(\ce)$$
the projection to $\Quot^{l}_{S}(\ce)$, by $\ce\rightarrow\cq$ the universal quotient\footnote{Here and elsewhere we will sometimes suppress notationally the pullback along projections.} on $S\times\Quot^{l}_{S}(\ce)$, and by $\cs$ its kernel. We will frequently take the Fourier-Mukai transform with kernel $\cq$:
a locally free sheaf $\cf$ on $S$ induces a tautological sheaf
$$\cf^{[l]}=\pi_{\ast}(\cf\otimes\cq) \quad \mathrm{on} \quad \Quot_{S}^{l}(\ce).$$
It is locally free with fibre $\H^{0}(\cf\otimes\cq_q)$ over a point $q$ corresponding to the quotient $\ce\rightarrow\cq_q$ on $S$.
An elementary but important observation is \cite{Stark}:
\begin{lemma}\label{compatibility}
	The sheaves $\cf^{[l]}$ are compatible with \ref{onea}-\ref{threea}.
\end{lemma}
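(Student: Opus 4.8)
The plan is to reduce all three compatibilities to the behaviour of the universal quotient $\cq$ under the morphisms of \ref{onea}--\ref{threea}, and then to transport each statement to $\cf^{[l]}=\pi_{\ast}(\cf\otimes\cq)$ by cohomology and base change. Throughout I will write $\cq_{\ce}$ for the universal quotient on $S\times\Quot^{l}_{S}(\ce)$ when I need to indicate which Quot scheme it lives over. The real content of the lemma is to make precise what ``compatible'' means: that $\iota^{\ast}\cf^{[l]}\cong\cf^{[l]}$ for the immersion $\iota$ of \ref{onea}, that the isomorphism $\phi$ of \ref{twoa} satisfies $\phi^{\ast}\cf^{[l]}\cong(\cf\otimes\cl)^{[l]}$, and that $\cf^{[l]}$ is equivariant for the $\mathrm{Aut}(\ce)$-action of \ref{threea}.

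First I would record the base change input. The universal quotient $\cq$ is flat over $\Quot^{l}_{S}(\ce)$ with $0$-dimensional support on every fibre of $\pi$, so $R^{i}\pi_{\ast}(\cf\otimes\cq)=0$ for $i>0$ and $\cf^{[l]}$ is locally free, as recalled above. Cohomology and base change therefore applies: for any morphism $T\to\Quot^{l}_{S}(\ce)$ the formation of $\pi_{\ast}(\cf\otimes\cq)$ commutes with base change along $T$. Hence, once I know how $\cq$ pulls back under each of the three morphisms, the corresponding statement for $\cf^{[l]}$ follows by applying the functor $\pi_{\ast}(\cf\otimes-)$.

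Next I would verify the behaviour of $\cq$ directly from the universal property of the Quot scheme. For the immersion $\iota$, a point of $\Quot^{l}_{S}(\ce^{''})$ is a quotient $\ce^{''}\to\cq$, which $\iota$ carries to the composite $\ce\to\ce^{''}\to\cq$; the underlying quotient sheaf is unchanged, so universality gives $(\mathrm{id}_{S}\times\iota)^{\ast}\cq_{\ce}\cong\cq_{\ce^{''}}$, and applying $\pi_{\ast}(\cf\otimes-)$ yields $\iota^{\ast}\cf^{[l]}\cong\cf^{[l]}$. For $\phi$, tensoring a quotient $\ce\to\cq$ by $\cl$ produces $\ce\otimes\cl\to\cq\otimes\cl$, so the pullback of the universal quotient on $\Quot^{l}_{S}(\ce\otimes\cl)$ is $\cq_{\ce}\otimes\cl$, and $\pi_{\ast}(\cf\otimes-)$ gives $\phi^{\ast}\cf^{[l]}\cong(\cf\otimes\cl)^{[l]}$. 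For the action of \ref{threea}, an automorphism $g$ of $\ce$ sends $\ce\to\cq$ to $\ce\xrightarrow{g}\ce\to\cq$, once more leaving the quotient sheaf unchanged, so $(\mathrm{id}_{S}\times g)^{\ast}\cq\cong\cq$ and $\cf^{[l]}$ is $\mathrm{Aut}(\ce)$-equivariant.

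The only point demanding care is the base change step: one must check that $\cf\otimes\cq$ is flat over $\Quot^{l}_{S}(\ce)$ and has vanishing higher direct images, so that $\pi_{\ast}$ genuinely commutes with the pullbacks above. This is precisely where the hypothesis that the quotients have length $l$, and hence $0$-dimensional support, enters. Everything else is a formal consequence of the universal property, and since the identifications of $\cq$ under \ref{onea}--\ref{threea} are canonical, the resulting isomorphisms of tautological sheaves are canonical as well.
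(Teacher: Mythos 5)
Your proposal is correct, and it is essentially the intended argument: the paper itself states this lemma without proof (deferring to \cite{Stark}), and its precise formulation of compatibility given right after the lemma is exactly the three statements you prove. Your route --- identifying the pullback of the universal quotient under each of \ref{onea}--\ref{threea} via the universal property of the Quot scheme, and then transporting this to $\cf^{[l]}=\pi_{\ast}(\cf\otimes\cq)$ by cohomology and base change, which applies because $\cf\otimes\cq$ is flat over $\Quot^{l}_{S}(\ce)$ with $0$-dimensional fibrewise support --- is the standard proof of this fact.
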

More precisely, the sheaf $\cf^{[l]}$ on $\Quot^{l}_{S}(\ce)$ restricts to the sheaf $\cf^{[l]}$ on $\Quot^{l}_{S}(\ce^{''})$ under the embedding \ref{onea},
$\cf^{[l]}$ on $\Quot^{l}_{S}(\ce\otimes\cl)$ corresponds to $(\cf\otimes\cl)^{[l]}$ on $\Quot^{l}_{S}(\ce)$ under the isomorphism \ref{twoa},
and the sheaf $\cf^{[l]}$ on $\Quot^{l}_{S}(\ce)$ has a canonical linearization with respect to the action of the automorphism group of $\ce$ \ref{threea}.

In all that follows, we will take $\ce$ to be locally free of rank $r$. In this case, it is well-known that the canonical deformation-obstruction theory
\begin{equation*}
	R\SHom_{\pi}(\cs,\cq)^{\vee}\rightarrow L_{\Quot^{l}_{S}(\ce)}
\end{equation*}
is perfect of virtual dimension $lr$ \cite{OP}, and therefore gives rise to a virtual fundamental class 
$$[\Quot^{l}_{S}(\ce)]^{\vir}\in A_{lr}(\Quot^{l}_{S}(\ce)).$$
We will denote the two-term complex $R\SHom_{\pi}(\cs,\cq)$ by $T^{\vir}_{\Quot^{l}_{S}(\ce)}$,
and use the identification of the obstruction sheaf
$$\co b=\SExt^{1}_{\pi}(\cs,\cq)\xrightarrow{\sim}\SExt^{2}_{\pi}(\cq,\cq),$$
obtained by applying $\SExt_{\pi}(-,\cq)$ to the universal exact sequence.

\subsection{Localization along canonical curves}

By definition, the universal quotient sheaf $\cq$ is flat with respect to the smooth projective morphism $\pi$, in particular perfect.
Therefore, we obtain in particular a trace map $\Tr: R\SHom(\cq,\cq)\rightarrow \co$, and denote by
\begin{equation*}
	\Tr_{\pi}^{2}=h^{2}(R\pi_{\ast}\Tr):\SExt^{2}_{\pi}(\cq,\cq)\rightarrow R^{2}\pi_{\ast}\co\simeq\co\otimes \H^{2}(\co_{S})
\end{equation*}
the induced map on top degree cohomology sheaves (we refer to \cite{Illusie} for the standard properties of these trace maps).
We then have a canonical map
\begin{equation*}
	\sigma:\H^{0}(\Omega^{2}_{S})\rightarrow\Hom(\co b,\co)
\end{equation*}
taking a $2$-form $\omega\in \H^{0}(\Omega^{2}_{S})$ to the cosection $\sigma(\omega):\co b\rightarrow\co$ given by
\begin{equation*}
	\sigma(\omega)=\int_{S} \omega\cup\Tr^{2}_{\pi}.
\end{equation*}
Moreover, if $C$ denotes the zero scheme of $\omega$, \ref{onea} gives a closed immersion
$$\iota:\Quot^{l}_{C}(\ce\vert_{C})\rightarrow\Quot^{l}_{S}(\ce).$$
We can now prove \Cref{canonical}.

\begin{theorem}\label{explicitquot}
	Let $\omega$  be a $2$-form on $S$ whose zero scheme is a smooth irreducible curve $C$.
	Then 
	\\
	(i) the zero scheme of the cosection $\sigma(\omega)$ can be identified with $\Quot^{l}_{C}(\ce\vert_{C})$; \\
	(ii) the $\sigma(\omega)$-localized virtual fundamental class of $\Quot^{l}_{S}(\ce)$ is given by
	\begin{equation*}
		[\Quot^{l}_{S}(\ce)]^{\vir}_{\sigma(\omega)}=(-1)^l [\Quot^{l}_{C}(\ce\vert_{C})].
	\end{equation*}
	In particular,
	\begin{equation*}
		\iota_{\ast} [\Quot^{l}_{C}(\ce\vert_{C})]=(-1)^{l}[\Quot^{l}_{S}(\ce)]^{\vir}.
	\end{equation*}
\end{theorem}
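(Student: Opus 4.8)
The plan is to derive the theorem from \Cref{explicit}, applied to $X=\Quot^l_S(\ce)$ with the cosection $\sigma=\sigma(\omega)$ of its obstruction sheaf $\co b=\SExt^2_\pi(\cq,\cq)$. With this in hand, part (i) identifies the zero scheme, part (ii) reads off the localized class once the hypotheses of \Cref{explicit} are verified, and the final displayed formula follows by pushing forward along $\iota$ and using that the diagram (\ref{eq:compatibility}) gives $\iota_\ast[X]^\vir_{\sigma}=[X]^\vir$.

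For (i), I would begin with the fibrewise picture. Over a point $q$ with quotient $\ce\to\cq_q$, the fibre of $\co b$ is $\Ext^2(\cq_q,\cq_q)$, and relative Serre duality for $\pi$—whose relative dualizing sheaf is $\Omega^2_S$—together with the standard compatibility of the trace map with duality identifies $\sigma(\omega)$ at $q$ with the functional on $\Ext^2(\cq_q,\cq_q)$ obtained by pairing against the multiplication endomorphism $\cq_q\xrightarrow{\,\cdot\omega\,}\cq_q\otimes\Omega^2_S$, viewed in $\Hom(\cq_q,\cq_q\otimes\Omega^2_S)=\Ext^2(\cq_q,\cq_q)^\vee$. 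By nondegeneracy of the pairing, $\sigma(\omega)$ vanishes at $q$ precisely when multiplication by $\omega$ annihilates $\cq_q$; as $\omega$ locally trivializes $\Omega^2_S$ with zero divisor $C$, this is equivalent to $\cq_q$ being an $\co_C$-module, i.e. to the quotient factoring through $\ce|_C$, i.e. to $q\in\Quot^l_C(\ce|_C)$. This yields the set-theoretic equality $Z(\sigma(\omega))=\Quot^l_C(\ce|_C)$. To upgrade it to the scheme-theoretic identification in (i), I would globalize: multiplication by $\omega$ defines a map $\co\to\SHom_\pi(\cq,\cq\otimes\Omega^2_S)$ whose Serre dual is $\sigma(\omega)$, while $\Quot^l_C(\ce|_C)\subset X$ is the vanishing locus of the composite $\ce\otimes(\Omega^2_S)^\vee\hookrightarrow\ce\to\cq$; I would then match the ideal generated by the image of $\sigma(\omega)$ with the ideal of $\Quot^l_C(\ce|_C)$. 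This comparison of ideals is the technical heart of the argument, and I expect it to be the main obstacle.

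For (ii), I would check the hypotheses of \Cref{explicit}. Since $C$ is smooth and irreducible and $\ce|_C$ is locally free of rank $r$, the curve Quot scheme $\Quot^l_C(\ce|_C)$ is smooth and irreducible of dimension $lr$, which equals the virtual dimension $v=lr$ of $X$; by (i), $Z(\sigma(\omega))$ is therefore irreducible of dimension $v$. To see that $Z(\sigma(\omega))^0\cap X^0$ is nonempty, I would take $q$ consisting of $l$ distinct reduced points of $C$, each carrying a length-one quotient of the fibre of $\ce$: such $q$ is a smooth point of $\Quot^l_C(\ce|_C)$, and also of $X$, since near $l$ distinct simple quotients $\Quot^l_S(\ce)$ is smooth of dimension $l(r+1)$ (locally a product of copies of the $\mathbb{P}^{r-1}$-bundle $\Quot^1_S(\ce)=\mathbb{P}(\ce)$ over $S$). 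At such $q$ the codimension is $c=l(r+1)-lr=l$, in agreement with the obstruction sheaf being locally free of rank $\dim X^0-v=l$ on $X^0$ (\Cref{smooth}). Then \Cref{explicit} gives $[\Quot^l_S(\ce)]^\vir_{\sigma(\omega)}=(-1)^l[\Quot^l_C(\ce|_C)]$, and the pushforward formula follows as in the first paragraph.
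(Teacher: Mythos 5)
Your proposal is correct and follows the paper's own proof essentially step by step: part (i) is the same fibrewise trace/Serre-duality argument, and part (ii) verifies the hypotheses of \Cref{explicit} just as the paper does, with your choice of $l$ distinct rank-one quotients on $C$ (and the local product structure of $\Quot^{l}_{S}(\ce)$ near them) being a harmless variant of the paper's criterion that $q$ is a smooth point of $\Quot^{l}_{S}(\ce)$ if and only if $\dim\End(\cq_{q})=l$, and your appeal to standard facts replacing the paper's explicit $\Ext^{1}$-vanishing and flag-Quot-scheme induction for smoothness and irreducibility of $\Quot^{l}_{C}(\ce\vert_{C})$. The scheme-theoretic identification of the zero scheme, which you flag as the main obstacle, is not carried out in the paper either: its proof of (i) stops at exactly the pointwise statement you already give, and that is all it feeds into \Cref{explicit}.
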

\begin{proof}
	(i) Let $q$ be a point of $\Quot^l_{S}(\ce)$. By the base change property of the trace map $\Tr^{2}_{\pi}$, the fibre of $\sigma(\omega)$ over $q$ is given by
	$\int_{S} \omega\cup\Tr^{2}_{q}$, where
	$$\Tr^{2}_{q}:\Ext^{2}(\cq_{q},\cq_{q})\rightarrow\H^{2}(\co_{S})$$
	is the global trace map corresponding to $q$. The multiplicativity property of the trace map asserts that the diagram
	\begin{equation*}
		\begin{tikzcd}
			\Ext^{2}(\cq_{q},\cq_{q}) \arrow[r, "\Tr^{2}_{q}"] \arrow[d, swap, "\cup 1\otimes\omega"] & \H^{2}(\co_{S}) \arrow[d, "\cup\omega"]   \\
			\Ext^{2}(\cq_{q},\cq_{q}\otimes\Omega^{2}_{S}) \arrow[r, swap, "\Tr^{2}_{q}(\Omega^{2}_{S})"] & \H^{2}(\Omega^{2}_{S})
		\end{tikzcd}
	\end{equation*}
	is commutative, where $\cup$ denotes the Yoneda pairing. It  implies in particular that under Serre duality
	\begin{equation*}
		\Ext^2(\cq_q,\cq_q)^\vee\simeq\Hom(\cq_{q},\cq_{q}\otimes\Omega^{2}_{S})
	\end{equation*}
	the fibre of $\sigma(\omega)$ over $q$ corresponds to the map $1\otimes\omega:\cq_q\rightarrow\cq_q\otimes\Omega^{2}_{S}$.
	Hence $\sigma(\omega)$ vanishes at $q$ if and only if the support of $\cq_{q}$ is contained in $C$,
	which in turn is equivalent to $q$ being contained in  $\Quot^{l}_{C}(\ce\vert_{C})\subset\Quot^l_{S}(\ce)$.
	
	(ii) It suffices to show that the assumptions of \Cref{explicit} are satisfied. Let $q$ be a point of $\Quot^{l}_{C}(\ce\vert_{C})$.
	Since $C$ is a curve, the subsheaf $\cs_{q}\subset\ce\vert_{C}$ is locally free of rank $r$, the tangent space
	$$\Hom(\cs_{q},\cq_{q})\simeq\H^{0}(\cs_{q}^{\vee}\otimes\cq_{q})$$
	has dimension $lr$, and the obstruction space
	$$\Ext^{1}(\cs_{q},\cq_{q})\simeq \H^{1}(\cs_{q}^{\vee}\otimes\cq_{q})=0.$$
	Hence $\Quot^{l}_{C}(\ce\vert_{C})$ is smooth of dimension $lr$, which is the virtual dimension of $\Quot^{l}_{S}(\ce)$.
	To see that $\Quot^{l}_{C}(\ce\vert_{C})$ is irreducible, one can use induction on $l$ and the flag Quot scheme: we have
	$\Quot^{1}_{C}(\ce\vert_{C})\simeq\bp(\ce\vert_{C})$, and for the induction step one observes that the canonical  map
	$$\Quot^{l,l+1}_{C}(\ce\vert_{C})\rightarrow C\times\Quot^{l}_{C}(\ce\vert_{C})$$
	is the projectivization of the universal subsheaf, which is locally free.
	To show that the intersection of  $\Quot^{l}_{C}(\ce\vert_{C})$ with the smooth locus $\Quot^{l}_{S}(\ce)^{0}$ is nonempty,
	recall \cite{Stark} that a point $q$ of $\Quot^{l}_{S}(\ce)$ is smooth if and only if $\End(\cq_{q})$ has dimension $l$.
	Thus any point  $q$ of  $\Quot^{l}_{C}(\ce\vert_{C})$ represented by a quotient of the form
	$$\ce\vert_{C}\rightarrow\cl\rightarrow\cl\otimes\co_{Z},$$
	where $\cl$ is an invertible sheaf on $C$ and $Z\subset C$ a subscheme of length $l$, lies in the intersection.
\end{proof}

\begin{example}\label{abundance}
\Cref{explicitquot} applies to a large class of surfaces $S$.
If the canonical sheaf $\Omega^{2}_S$ is very ample --- this is satisfied by any smooth complete intersection
$S\subset\bp_{n}$ of type $(d_{1}, \ldots, d_{n-2})$ with $\sum_{i=1}^{n-2}d_{i}\geqslant n+2$ ($n\geqslant 3$) --- then Bertini's
theorem implies that the zero scheme of a generic $2$-form is smooth and irreducible. If $S$ has trivial canonical sheaf,
then by taking $\omega$ to be nonzero, the cosection $\sigma(\omega)$ is surjective, and thus $[\Quot^{l}_{S}(\ce)]^{\vir}=0$;
in contrast, however, the exceptional divisor of a blow up of $S$ is cut out by a $2$-form.
\end{example}

\begin{remark}
	If $C\subset S$ is a smooth curve which is not canonical, then
	$$\iota_{\ast} [\Quot^{l}_{C}(\ce\vert_{C})]=(-1)^{l}[\Quot^{l}_{S}(\ce)]^{\vir}$$
	is in general false.
\end{remark}

Consider now a locally free quotient $\ce\rightarrow\ce^{''}$ with kernel $\ce^{'}$. By \ref{onea} we have a closed immersion
$$\iota:\Quot^{l}_{S}(\ce^{''})\rightarrow\Quot^{l}_{S}(\ce).$$
As observed by the author in \cite{Stark}, the inclusion $\ce^{'}\rightarrow\ce$ induces a regular section of $\ce^{'\vee[l]}$
with zero scheme $\Quot^{l}_{S}(\ce^{''})$, and we have an exact triangle
\begin{equation}\label{eq:secondexact}
	T_{\Quot^{l}_{S}(\ce^{''})}^{\vir}\rightarrow T^{\vir}_{\Quot^{l}_{S}(\ce)}\vert_{\Quot^{l}_{S}(\ce^{''})}\rightarrow \ce^{'\vee[l]}\rightarrow T^{\vir}_{\Quot^{l}_{S}(\ce^{''})}[1]
\end{equation}
in the derived category of $\Quot^{l}_{S}(\ce^{''})$. This led to the following result:
\begin{theorem}[\cite{Stark}]\label{fundamentalclass}
	We have
	\begin{equation*}
		\iota_{\ast}[\Quot^{l}_{S}(\ce^{''})]^{\vir}=e(\ce^{'\vee[l]})\cap[\Quot^{l}_{S}(\ce)]^{\vir}
	\end{equation*}
	for any locally free quotient  $\ce\rightarrow\ce^{''}$ with kernel $\ce^{'}$.
\end{theorem}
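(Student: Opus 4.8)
The plan is to read the identity as an instance of the excess-intersection formula for a regular section, combined with the compatibility of virtual fundamental classes under virtual pull-back. Write $X=\Quot^{l}_{S}(\ce)$, $Y=\Quot^{l}_{S}(\ce^{''})$ and $V=\ce^{'\vee[l]}$, and let $s$ be the regular section of $V$ induced by $\ce^{'}\rightarrow\ce$, with zero scheme $Y$. Because $s$ is regular, $\iota\colon Y\rightarrow X$ is a regular embedding with normal bundle $N_{Y/X}=V\vert_{Y}$, and Fulton's localized top Chern class \cite{Fulton} furnishes a refined Gysin map $\iota^{!}\colon A_{\ast}(X)\rightarrow A_{\ast}(Y)$ obeying the excess formula $\iota_{\ast}\iota^{!}(\alpha)=e(V)\cap\alpha$. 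Thus the theorem reduces to the single statement
\begin{equation*}
\iota^{!}[X]^{\vir}=[Y]^{\vir}.
\end{equation*}

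To establish this I would use the deformation-obstruction theories through the exact triangle (\ref{eq:secondexact}). Dualizing it gives a distinguished triangle
\begin{equation*}
V^{\vee}\rightarrow\iota^{\ast}E_{X}\rightarrow E_{Y}\xrightarrow{+1}
\end{equation*}
of the obstruction-theory complexes $E_{X}=(T^{\vir}_{X})^{\vee}$, $E_{Y}=(T^{\vir}_{Y})^{\vee}$, whose rotation $\iota^{\ast}E_{X}\rightarrow E_{Y}\rightarrow V^{\vee}[1]\xrightarrow{+1}$ identifies the relative obstruction-theory complex of $\iota$ with the genuine shifted bundle $V^{\vee}[1]$. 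For the regular embedding $\iota$ one has $L_{Y/X}\simeq(V\vert_{Y})^{\vee}[1]$, so $V^{\vee}[1]$ has the right degree to map to $L_{Y/X}$, and this relative theory is the isomorphism coming from the regularity of $s$. This exhibits a compatible triple of perfect obstruction theories over the morphism $\iota$; feeding it into the virtual pull-back formalism of Manolache — or, equivalently, the functoriality of virtual classes of Behrend--Fantechi \cite{BF} — gives $\iota^{!}[X]^{\vir}=[Y]^{\vir}$, since the relative theory is a shifted bundle and hence $\iota^{!}$ coincides with the refined Gysin map of the first paragraph.

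The main obstacle is to upgrade (\ref{eq:secondexact}) from a triangle in the derived category to an honest compatibility of obstruction theories: one must realize the middle arrow as a morphism $\iota^{\ast}E_{X}\rightarrow E_{Y}$ fitting into a morphism of triangles over the canonical triangle $\iota^{\ast}L_{X}\rightarrow L_{Y}\rightarrow L_{Y/X}\xrightarrow{+1}$ of truncated cotangent complexes, and to check that the induced map $V^{\vee}[1]\rightarrow L_{Y/X}$ is the regularity isomorphism. This cannot be read off the $K$-theoretic triangle alone; it requires tracking the maps to $L_{X}$ and $L_{Y}$ induced by the universal quotient $\ce\rightarrow\cq$. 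For a self-contained alternative I would instead argue directly with cones: choosing a global resolution $[B^{-1}\rightarrow B^{0}]$ of $E_{X}$ and using the triangle to resolve $E_{Y}$ by $[B^{-1}\vert_{Y}\oplus V^{\vee}\rightarrow B^{0}\vert_{Y}]$, so that $F_{Y}=F_{X}\vert_{Y}\oplus V$, one identifies the Behrend--Fantechi cone $C_{Y}\subset F_{Y}$ with the specialization of $C_{X}\vert_{Y}$ under the deformation to the normal cone of $Y\subset X$, whereupon the two Gysin computations agree. Once either compatibility is in hand, the conclusion is formal.
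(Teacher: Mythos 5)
Your plan is essentially correct, but it is not the route this paper takes: \Cref{fundamentalclass} is quoted here from \cite{Stark} without proof, and the two ingredients you build on --- the regular section of $\ce^{'\vee[l]}$ cutting out $\Quot^{l}_{S}(\ce^{''})$, and the triangle (\ref{eq:secondexact}) --- are exactly the facts the paper records from that reference just before the statement, so your proposal in effect reconstructs the proof of \cite{Stark} rather than the paper's own argument. What the paper itself proves is only the corollary immediately following the theorem, i.e.\ the same identity under the extra hypothesis that $S$ contains a smooth canonical curve $C$, and it does so by a different mechanism: both vertical arrows of the square (\ref{eq:Chow}) are handled by \Cref{explicitquot}, trading the two virtual classes for $(-1)^{l}$ times the fundamental classes of the smooth curve Quot schemes, so that the section/Euler-class argument is needed only for the embedding $\Quot^{l}_{C}(\ce^{''}\vert_{C})\subset\Quot^{l}_{C}(\ce\vert_{C})$, where regularity is automatic (smooth schemes, expected codimension); \Cref{compatibility} and the commutativity of (\ref{eq:Chow}) then conclude. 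Your approach buys generality --- it works for every surface, whereas the paper's argument is vacuous when, say, $p_{g}(S)=0$, which is precisely why the general case must be imported from \cite{Stark} --- while the paper's argument buys brevity, avoids the virtual pull-back formalism entirely, and showcases its main theorem.

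Two caveats on your write-up. First, the regularity of the section on the singular scheme $\Quot^{l}_{S}(\ce)$ is a genuine theorem of \cite{Stark}, not something that follows from a dimension count; you are entitled to quote it since the paper states it, but it should be flagged as an input of the same standing as the triangle. Second, you correctly identify the crux: a distinguished triangle of obstruction complexes carries no information about virtual classes until it is promoted to a morphism of triangles over $\iota^{\ast}L_{X}\rightarrow L_{Y}\rightarrow L_{Y/X}$ with relative part $V^{\vee}[1]\rightarrow L_{Y/X}$ the isomorphism given by regularity; only then do Manolache's virtual pull-back or Behrend--Fantechi functoriality apply. Your reduction $\iota_{\ast}\iota^{!}\alpha=e(V)\cap\alpha$ via the localized top Chern class is fine, so the proposal is a correct and honest reduction to that compatibility statement --- but as written that step is outlined, not executed, and carrying it out (by tracking the obstruction-theory morphisms through the universal exact sequences, as you indicate) is precisely the content of the cited reference.
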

\Cref{explicitquot} implies \Cref{fundamentalclass} for generic surfaces $S$:
\begin{corollary}
	Let $C\subset S$ be a smooth canonical curve. Then
	\begin{equation*}
		\iota_{\ast}[\Quot^{l}_{S}(\ce^{''})]^{\vir}=e(\ce^{'\vee[l]})\cap[\Quot^{l}_{S}(\ce)]^{\vir}
	\end{equation*}
	for any locally free quotient  $\ce\rightarrow\ce^{''}$ with kernel $\ce^{'}$.
\end{corollary}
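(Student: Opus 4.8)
The plan is to reduce the statement to the curve $C$, where every Quot scheme in sight is smooth, and then transport the resulting identity up to $S$ by means of \Cref{explicitquot}. Fix a $2$-form $\omega$ cutting out $C$, and write $\iota:\Quot^{l}_{S}(\ce'')\to\Quot^{l}_{S}(\ce)$ for the immersion appearing in the corollary, and $a:\Quot^{l}_{C}(\ce\vert_{C})\to\Quot^{l}_{S}(\ce)$, $a'':\Quot^{l}_{C}(\ce''\vert_{C})\to\Quot^{l}_{S}(\ce'')$ for the inclusions of the curve Quot schemes. Since $\ce$ and $\ce''$ are both locally free and $C$ is canonical, \Cref{explicitquot} applies to each, giving $a_{\ast}[\Quot^{l}_{C}(\ce\vert_{C})]=(-1)^{l}[\Quot^{l}_{S}(\ce)]^{\vir}$ and, equivalently, $[\Quot^{l}_{S}(\ce'')]^{\vir}=(-1)^{l}a''_{\ast}[\Quot^{l}_{C}(\ce''\vert_{C})]$.

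Next I would pass to the curve. Restricting $0\to\ce'\to\ce\to\ce''\to 0$ along $C\hookrightarrow S$ stays exact, because $\ce''$ is locally free, so $\ce\vert_{C}\to\ce''\vert_{C}$ is a locally free quotient with kernel $\ce'\vert_{C}$ on the smooth curve $C$. Applying the construction recalled before \Cref{fundamentalclass} to $C$ in place of $S$, the inclusion $\ce'\vert_{C}\to\ce\vert_{C}$ induces a section of the tautological bundle $(\ce'\vert_{C})^{\vee[l]}$ over $\Quot^{l}_{C}(\ce\vert_{C})$ whose zero scheme is $\Quot^{l}_{C}(\ce''\vert_{C})$. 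By the proof of \Cref{explicitquot} the curve Quot schemes are smooth, with $\dim\Quot^{l}_{C}(\ce\vert_{C})=lr$ and $\dim\Quot^{l}_{C}(\ce''\vert_{C})=l\,\mathrm{rk}(\ce'')$; since the resulting codimension $l\,\mathrm{rk}(\ce')$ equals the rank of $(\ce'\vert_{C})^{\vee[l]}$, the section is regular, and hence by the standard Euler class formula \cite{Fulton}
\[
\iota_{C\ast}[\Quot^{l}_{C}(\ce''\vert_{C})]=e\big((\ce'\vert_{C})^{\vee[l]}\big)\cap[\Quot^{l}_{C}(\ce\vert_{C})],
\]
where $\iota_{C}:\Quot^{l}_{C}(\ce''\vert_{C})\to\Quot^{l}_{C}(\ce\vert_{C})$ is the curve-level immersion.

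Finally I would assemble these identities over $\Quot^{l}_{S}(\ce)$. The four immersions fit into a commutative square $\iota\circ a''=a\circ\iota_{C}$, by the functoriality of property \ref{onea}: both composites send a quotient $\ce''\vert_{C}\to Q$ to the quotient $\ce\to\ce''\vert_{C}\to Q$. Moreover, by \Cref{compatibility} the tautological sheaf $\ce^{'\vee[l]}$ on $\Quot^{l}_{S}(\ce)$ restricts along $a$ to $(\ce'\vert_{C})^{\vee[l]}$ (the universal quotient pulls back to the curve's universal quotient, which is supported on $C$). Combining the two instances of \Cref{explicitquot}, the curve formula, the commutative square, and the projection formula yields
\begin{align*}
\iota_{\ast}[\Quot^{l}_{S}(\ce'')]^{\vir}
&=(-1)^{l}\,a_{\ast}\iota_{C\ast}[\Quot^{l}_{C}(\ce''\vert_{C})]\\
&=(-1)^{l}\,a_{\ast}\big(e((\ce'\vert_{C})^{\vee[l]})\cap[\Quot^{l}_{C}(\ce\vert_{C})]\big)\\
&=(-1)^{l}\,e(\ce^{'\vee[l]})\cap a_{\ast}[\Quot^{l}_{C}(\ce\vert_{C})]\\
&=e(\ce^{'\vee[l]})\cap[\Quot^{l}_{S}(\ce)]^{\vir},
\end{align*}
the two factors of $(-1)^{l}$ cancelling in the last step via \Cref{explicitquot} for $\ce$.

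I expect the main obstacle to be the second step: checking that the restricted data on $C$ really is a regular section with zero scheme exactly $\Quot^{l}_{C}(\ce''\vert_{C})$ of the expected codimension. The smoothness and dimension counts that make this work are precisely those already established in the proof of \Cref{explicitquot}, so in fact the difficulty is mild; the remaining bookkeeping — commutativity of the square and the tautological restriction supplied by \Cref{compatibility} — is routine.
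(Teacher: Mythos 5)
Your proof is correct and follows essentially the same route as the paper: both reduce to the canonical curve via the commutative square of Quot scheme immersions, establish the Euler class identity on the curve using the regularity of the section (automatic by smoothness and the dimension count), and transport it back with \Cref{compatibility}, the projection formula, and the two applications of \Cref{explicitquot}, with the factors of $(-1)^{l}$ cancelling. The only cosmetic difference is that you make the projection formula step explicit where the paper folds it into a single sentence.
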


\begin{proof}
	Indeed, we have a commutative diagram of morphisms of Quot schemes
	\begin{equation}\label{eq:Chow}
		\begin{tikzcd}
			\Quot^{l}_{C}(\ce^{''}\vert_{C}) \arrow[r] \arrow[d] & \Quot^{l}_{C}(\ce\vert_{C})  \arrow[d]   \\
			\Quot^{l}_{S}(\ce^{''}) \arrow[r] & \Quot^{l}_{S}(\ce)
		\end{tikzcd}
	\end{equation}
	induced by the diagram
	\begin{equation*}
		\begin{tikzcd}
			\ce \arrow[r] \arrow[d] & \ce^{''}  \arrow[d]   \\
			\ce\vert_{C}  \arrow[r] & \ce^{''}\vert_{C}
		\end{tikzcd}
	\end{equation*}
	of morphisms  of sheaves on $S$ and property \ref{onea}. The image of $[\Quot^{l}_{C}(\ce^{''}\vert_{C})]$ under $$A_{\ast}(\Quot^{l}_{C}(\ce^{''}\vert_{C})\rightarrow A_{\ast}(\Quot^{l}_{S}(\ce^{''}))$$
	is $(-1)^{l}[\Quot^{l}_{S}(\ce^{''})]^{\vir}$ by \Cref{explicitquot}. On the other hand, the image of $[\Quot^{l}_{C}(\ce^{''}\vert_{C})]$ under
	$$A_{\ast}(\Quot^{l}_{C}(\ce^{''}\vert_{C})\rightarrow A_{\ast}(\Quot^{l}_{C}(\ce\vert_{C}))$$
	is $e(\ce\vert_{C}^{'\vee[l]})\cap[\Quot^{l}_{C}(\ce\vert_{C})]$. Indeed, this follows from the same argument used to prove \Cref{fundamentalclass},
	but here the section cutting out $\Quot^{l}_{C}(\ce^{''}\vert_{C})\subset \Quot^{l}_{C}(\ce\vert_{C})$ is automatically regular (as these Quot schemes are smooth).
	By \Cref{compatibility}, the tautological sheaf $\ce\vert_{C}^{'\vee[l]}$ on $\Quot^{l}_{C}(\ce\vert_{C})$ is the restriction of the tautological sheaf $\ce^{'\vee[l]}$ on $\Quot^{l}_{S}(\ce)$,
	and the map 
	$$A_{\ast}(\Quot^{l}_{C}(\ce\vert_{C})\rightarrow A_{\ast}(\Quot^{l}_{S}(\ce))$$
	takes the class $e(\ce^{'}\vert_{C}^{\vee[l]})\cap[\Quot^{l}_{C}(\ce\vert_{C})]$ to
	$(-1)^{l} e(\ce^{'\vee[l]})\cap[\Quot^{l}_{S}(\ce)]^{\vir}$ by \Cref{explicitquot}. The result thus follows from the commutativity of the square (\ref{eq:Chow}).
\end{proof}

\subsection{Universality}

\begin{lemma}\label{auxiliary}
	Let $C\subset S$ be a smooth curve. Then we have an exact triangle of the form
	\begin{equation*}
		T_{\Quot^{l}_{C}(\ce\vert_{C})}\rightarrow T^{\vir}_{\Quot^{l}_{S}(\ce)}\vert_{\Quot^{l}_{C}(\ce\vert_{C})}\rightarrow R\SHom_{\pi}(\cq_{C},\cq_{C}(C))\rightarrow T_{\Quot^{l}_{C}(\ce\vert_{C})}[1]
	\end{equation*}
	in the derived category of $\Quot^{l}_{C}(\ce\vert_{C})$, where $\cq_{C}$ denotes the universal quotient sheaf on $C\times\Quot^{l}_{C}(\ce\vert_{C})$.
\end{lemma}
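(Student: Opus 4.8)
The plan is to transport the whole computation from $S\times\Quot^{l}_{C}(\ce\vert_{C})$ down to $C\times\Quot^{l}_{C}(\ce\vert_{C})$ by means of the adjunction $Lj^{\ast}\dashv j_{\ast}$, and then to extract the triangle from a single application of the octahedral axiom. Throughout write $Q=\Quot^{l}_{C}(\ce\vert_{C})$, let $j\colon C\times Q\hookrightarrow S\times Q$ be the base change of the inclusion $C\subset S$, write $N=\co_{S}(C)\vert_{C}$ for the (relative) normal bundle, so that $\cq_{C}(C)=\cq_{C}\otimes N$, and follow the paper in writing $\pi$ for the projection to $Q$ from either product. Since $Q$ parametrises quotients supported on $C$, the universal quotient $\ce\to\cq$ restricts on $S\times Q$ to a short exact sequence $0\to\cs_{Q}\to\ce\to j_{\ast}\cq_{C}\to 0$, and the formation of the perfect complex $R\SHom_{\pi}(\cs,\cq)$ commutes with base change; hence $T^{\vir}_{\Quot^{l}_{S}(\ce)}\vert_{Q}=R\SHom_{\pi}(\cs_{Q},j_{\ast}\cq_{C})$.

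First I would rewrite this restriction purely in terms of $C\times Q$. The internal adjunction $R\SHom_{S\times Q}(-,j_{\ast}(-))\cong j_{\ast}R\SHom_{C\times Q}(Lj^{\ast}(-),-)$, together with $\pi\circ j=\pi$, gives a canonical isomorphism $R\SHom_{\pi}(\cs_{Q},j_{\ast}\cq_{C})\cong R\SHom_{\pi}(Lj^{\ast}\cs_{Q},\cq_{C})$, and likewise $R\SHom_{\pi}(\ce,j_{\ast}\cq_{C})\cong R\SHom_{\pi}(\ce\vert_{C},\cq_{C})$ since $\ce$ is locally free. So it suffices to understand $Lj^{\ast}\cs_{Q}$ as an object of the derived category of $C\times Q$. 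Applying $Lj^{\ast}$ to the restricted universal sequence yields a triangle $Lj^{\ast}\cs_{Q}\to\ce\vert_{C}\xrightarrow{\alpha}Lj^{\ast}j_{\ast}\cq_{C}\to Lj^{\ast}\cs_{Q}[1]$, so the remaining task is to resolve $Lj^{\ast}j_{\ast}\cq_{C}$.

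The key input is the derived self-intersection formula for the divisor $C\times Q$: there is a triangle $\cq_{C}\otimes N^{\vee}[1]\to Lj^{\ast}j_{\ast}\cq_{C}\xrightarrow{\epsilon}\cq_{C}\to\cq_{C}\otimes N^{\vee}[2]$, with $\epsilon$ the counit. The compatibility that drives the argument is that the composite $\ce\vert_{C}\xrightarrow{\alpha}Lj^{\ast}j_{\ast}\cq_{C}\xrightarrow{\epsilon}\cq_{C}$ is the adjunct of $\ce\to j_{\ast}\cq_{C}$, which is precisely the universal quotient $\ce\vert_{C}\to\cq_{C}$ of $Q=\Quot^{l}_{C}(\ce\vert_{C})$ (the universal families agree after restriction to $C$), with kernel the locally free sheaf $\cs_{C}$. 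I would then run the octahedral axiom on $\ce\vert_{C}\xrightarrow{\alpha}Lj^{\ast}j_{\ast}\cq_{C}\xrightarrow{\epsilon}\cq_{C}$: the three cones are $Lj^{\ast}\cs_{Q}[1]$, $\cs_{C}[1]$ and $\cq_{C}\otimes N^{\vee}[2]$, producing the triangle $Lj^{\ast}\cs_{Q}\to\cs_{C}\to\cq_{C}\otimes N^{\vee}[1]\to Lj^{\ast}\cs_{Q}[1]$ on $C\times Q$.

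Finally I would apply the contravariant functor $R\SHom_{\pi}(-,\cq_{C})$ to this last triangle. Using $R\SHom_{\pi}(Lj^{\ast}\cs_{Q},\cq_{C})=T^{\vir}_{\Quot^{l}_{S}(\ce)}\vert_{Q}$ from the first step, $R\SHom_{\pi}(\cs_{C},\cq_{C})=T_{\Quot^{l}_{C}(\ce\vert_{C})}$ (the tangent bundle of the smooth Quot scheme on the curve), and $R\SHom_{\pi}(\cq_{C}\otimes N^{\vee}[1],\cq_{C})=R\SHom_{\pi}(\cq_{C},\cq_{C}(C))[-1]$, and rotating once, gives exactly the asserted triangle. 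The main obstacle is the third step: pinning down the derived self-intersection triangle with the correct twist by $N^{\vee}$ and the correct shift, and verifying that the arrows produced by the octahedral axiom are the natural ones — in particular that $\epsilon\circ\alpha$ really is the curve's universal quotient, which is where the compatibility of the two universal families enters. Everything else is formal manipulation of adjunctions and distinguished triangles.
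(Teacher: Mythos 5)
Your proposal is correct and follows essentially the same route as the paper: both transport the problem to $C\times Q$ via base change and the adjunction between $Lj^{\ast}$ and $j_{\ast}$, both hinge on the triangle $Lj^{\ast}\cs_{Q}\rightarrow\cs_{C}\rightarrow\cq_{C}(-C)[1]$ (whose identification requires exactly the compatibility of the two universal quotients that you flag), and both finish by applying $R\SHom_{\pi}(-,\cq_{C})$. The only difference is packaging: where you invoke the derived self-intersection triangle and the octahedral axiom, the paper notes that the relevant derived pullbacks are sheaves (since $L^{1}j^{\ast}\ce=0$ forces $L^{1}j^{\ast}\cs_{Q}=0$), so your triangle appears there as an honest short exact sequence $0\rightarrow\cq_{C}(-C)\rightarrow j^{\ast}\cs_{Q}\rightarrow\cs_{C}\rightarrow 0$, obtained from the long exact sequence of derived pullbacks applied to the morphism of universal sequences --- no octahedron needed.
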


\begin{proof}
	By definition of the embedding $\Quot^{l}_{C}(\ce\vert_{C})\rightarrow\Quot^{l}_{S}(\ce)$, we have a morphism of  exact sequences of sheaves
	\begin{equation*}
		\begin{tikzcd}
			0 \arrow[r] & \cs  \arrow[r] \arrow[d]  & \ce \arrow[r]  \arrow[d] & \cq \arrow[r] \arrow[d, "\wr"] & 0  \\
			0 \arrow[r] & \cs_{C}  \arrow[r] & \ce\vert_{C} \arrow[r] & \cq_{C}  \arrow[r] & 0 
		\end{tikzcd}
	\end{equation*}
	on $S\times\Quot^{l}_{C}(\ce\vert_{C})$, where the top exact sequence is obtained by restricting  the universal exact sequence on $S\times\Quot^{l}_{S}(\ce)$,
	and the bottom exact sequence is the pushforward of the universal exact sequence on $C\times\Quot^{l}_{C}(\ce\vert_{C})$. As the middle vertical map is given by restriction,
	taking the long exact sequence of  derived pullbacks with respect to the embedding
	$$C\times\Quot^{l}_{C}(\ce\vert_{C})\rightarrow S\times\Quot^{l}_{C}(\ce\vert_{C})$$
	gives an exact sequence of the form
	\begin{equation*}
		0\rightarrow\cq_{C}(-C)\rightarrow\cs\rightarrow\cs_{C}\rightarrow 0,
	\end{equation*}
	where $\cq_{C}(-C)$ is identified with the first derived pullback of the pushforward of $\cq_{C}$ to $S\times\Quot^{l}_{C}(\ce\vert_{C})$. Applying the functor $R\SHom_{\pi}(-,\cq_{C})$ gives
	the required exact triangle, as $\SHom_{\pi}(\cs_{C},\cq_{C})$ is the tangent sheaf of $\Quot^{l}_{C}(\ce\vert_{C})$ and
	$$R\SHom_{\pi}(\cs,\cq_{C})\simeq L\iota^{\ast}R\SHom_{\pi}(\cs,\cq)$$
	by the projection formula and base change.
\end{proof}

\begin{theorem}\label{universality}
	Let $P$ be a polynomial in the Chern classes of tautological sheaves $\cf_{1}^{[l]}, \ldots, \cf^{[l]}_m$ on $\Quot^{l}_{S}(\ce)$, and the virtual tangent bundle $T^{\vir}_{\Quot^{l}_{S}(\ce)}$.
	Then $$\int_{[\Quot^{l}_{S}(\ce)]^{\vir}} P$$ is given by a polynomial, with coefficients depending on $\ce,\cf_{1},\ldots,\cf_{m}$ only through their ranks,
	in the intersection numbers $$c_{1}(\ce\otimes\cf_{1})c_{1}(S), \ldots, c_{1}(\ce\otimes\cf_{m})c_{1}(S), c_{1}(\ce)c_{1}(S), c_{1}(S)^{2}.$$
\end{theorem}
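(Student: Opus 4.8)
The plan is to prove the theorem by a sequence of reductions mirroring the structural properties \ref{onea}--\ref{threea}, culminating in the universality theorem of Ellingsrud, G\"ottsche and Lehn \cite{EGL}. First I would use property \ref{twoa} to arrange that $\ce$ is globally generated: for a sufficiently ample invertible sheaf $\cl$ the sheaf $\ce\otimes\cl$ is globally generated, and the isomorphism $\Quot^{l}_{S}(\ce)\xrightarrow{\sim}\Quot^{l}_{S}(\ce\otimes\cl)$ of \ref{twoa} identifies $\cf_{i}^{[l]}$ with $(\cf_{i}\otimes\cl^{-1})^{[l]}$ and preserves $T^{\vir}$, so $\int_{[\Quot^{l}_{S}(\ce)]^{\vir}}P$ is unchanged upon replacing $(\ce,\cf_{i})$ by $(\ce\otimes\cl,\cf_{i}\otimes\cl^{-1})$. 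One checks that $c_{1}(\ce\otimes\cf_{i})c_{1}(S)$ and $c_{1}(S)^{2}$ are invariant under this substitution while $c_{1}(\ce)c_{1}(S)$ is not; this invariance is what will later pin down the tensor-product form of the answer. With $\ce$ globally generated I choose a surjection $\co^{\oplus N}\rightarrow\ce$ from $N=r+2$ general sections, with locally free kernel $\ce'$ of rank $2$, and apply \Cref{fundamentalclass}, giving $\iota_{\ast}[\Quot^{l}_{S}(\ce)]^{\vir}=e(\ce'^{\vee[l]})\cap[\Quot^{l}_{S}(\co^{\oplus N})]^{\vir}$. Using \Cref{compatibility} to restrict the tautological sheaves and the exact triangle \eqref{eq:secondexact} to rewrite $T^{\vir}_{\Quot^{l}_{S}(\ce)}$ in terms of $T^{\vir}_{\Quot^{l}_{S}(\co^{\oplus N})}$ and $\ce'^{\vee[l]}$, the projection formula turns $\int_{[\Quot^{l}_{S}(\ce)]^{\vir}}P$ into a tautological integral over $[\Quot^{l}_{S}(\co^{\oplus N})]^{\vir}$. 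This reduces the problem to the free sheaf, with the dependence on $c_{1}(\ce)$ now carried by $\ce'$, since $c_{1}(\ce')=-c_{1}(\ce)$.

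For the free sheaf I would invoke property \ref{threea}: the diagonal torus $(\mathbb{C}^{\ast})^{N}\subset\mathrm{Aut}(\co^{\oplus N})$ acts on $\Quot^{l}_{S}(\co^{\oplus N})$ with fixed locus $\bigsqcup_{l_{1}+\cdots+l_{N}=l}S^{[l_{1}]}\times\cdots\times S^{[l_{N}]}$, a disjoint union of products of smooth Hilbert schemes of points. By the virtual torus localization formula \cite{GP}, the integral becomes a sum over these components of equivariant integrals of tautological classes against the inverse equivariant Euler class of the virtual normal bundle. On each factor the restricted tautological sheaves and the virtual normal bundle are expressed through tautological bundles on the Hilbert schemes, so the theorem of \cite{EGL} applies and shows each contribution is a universal polynomial in the Chern numbers $c_{1}(S)^{2}$, $c_{2}(S)$, $c_{1}(\cf_{i})c_{1}(S)$, $c_{1}(\cf_{i})c_{1}(\cf_{j})$, $c_{2}(\cf_{i})$ (and the analogous numbers involving $\ce'$), with coefficients depending only on the ranks and on $l$. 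Since the number of fixed components and the torus weights depend only on $N$ and $l$, and $N=r+2$, the assembled answer is a universal polynomial $\mathcal{P}$ in these Chern numbers whose coefficients depend on $\ce,\cf_{1},\ldots,\cf_{m}$ only through their ranks.

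It remains to eliminate every Chern number other than $c_{1}(\ce\otimes\cf_{i})c_{1}(S)$, $c_{1}(\ce)c_{1}(S)$ and $c_{1}(S)^{2}$, in particular $c_{2}(S)$ and the products $c_{1}(\cf_{i})c_{1}(\cf_{j})$ and $c_{2}(\cf_{i})$. Here I would use \Cref{explicitquot}: whenever $S$ carries a smooth irreducible canonical curve $C$, we have $\int_{[\Quot^{l}_{S}(\ce)]^{\vir}}P=(-1)^{l}\int_{\Quot^{l}_{C}(\ce\vert_{C})}P$, where on the right $T^{\vir}_{\Quot^{l}_{S}(\ce)}\vert_{\Quot^{l}_{C}(\ce\vert_{C})}$ is rewritten via the exact triangle of \Cref{auxiliary}. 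Since the latter integral lives on a Quot scheme over the \emph{curve} $C$, it depends only on the genus $g=1+c_{1}(S)^{2}$, the degrees $\deg(\cf_{i}\vert_{C})$ and $\deg(\ce\vert_{C})$ — equivalently on $c_{1}(\cf_{i})c_{1}(S)$ and $c_{1}(\ce)c_{1}(S)$ — and the ranks; the surface-only quantities $c_{2}(S)$, $c_{1}(\cf_{i})c_{1}(\cf_{j})$, $c_{2}(\cf_{i})$ simply do not enter. Comparing this curve formula with the universal polynomial $\mathcal{P}$ over a sufficiently large supply of surfaces admitting smooth canonical curves — for instance the complete intersections with very ample canonical bundle of \Cref{abundance}, whose Chern data can be varied — forces $\mathcal{P}$ to be independent of all the offending numbers. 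What survives is a polynomial in $c_{1}(\cf_{i})c_{1}(S)$, $c_{1}(\ce)c_{1}(S)$ and $c_{1}(S)^{2}$; and since $c_{1}(\ce\otimes\cf_{i})c_{1}(S)=r\,c_{1}(\cf_{i})c_{1}(S)+\mathrm{rk}(\cf_{i})\,c_{1}(\ce)c_{1}(S)$ is an invertible, rank-dependent change of variables when $r\neq 0$, this is equivalently a polynomial in $c_{1}(\ce\otimes\cf_{i})c_{1}(S)$, $c_{1}(\ce)c_{1}(S)$ and $c_{1}(S)^{2}$, the invariance recorded in the first paragraph guaranteeing consistency with property \ref{twoa}. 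This is the claim.

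I expect the main obstacle to lie in the last two steps. In the localization step one must make precise that the contributions of the virtual normal bundle, together with the equivariant parameters, genuinely fall within the scope of \cite{EGL} and assemble into coefficients polynomial in the rank $N$; this is where the bookkeeping of \cite{OP} is essential. The more delicate conceptual point is the elimination of $c_{2}(S)$ and the tautological products: one must verify that surfaces carrying smooth irreducible canonical curves realize enough Chern-number configurations that a universal polynomial agreeing there with a $c_{2}(S)$-free curve expression must itself be $c_{2}(S)$-free. Establishing that this family of surfaces, together with the accompanying sheaves, is rich enough to support the required interpolation is the crux of the argument.
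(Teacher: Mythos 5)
Your proposal is correct and follows the paper's own proof nearly step for step: the reduction to globally generated $\ce$ via \ref{twoa}, the passage to a trivial bundle via \Cref{fundamentalclass}, \Cref{compatibility} and the triangle (\ref{eq:secondexact}), virtual torus localization onto products of Hilbert schemes followed by \cite{EGL}, and the canonical-curve reduction via \Cref{explicitquot} and \Cref{auxiliary} to rule out the extraneous Chern numbers; the two points you single out as delicate (the abundance/interpolation claim and the curve-level universality) are treated just as briefly in the paper, and the one ingredient you gloss over --- that the localization formula produces integrals against the \emph{virtual} classes of the fixed loci, converted into ordinary tautological integrals by the identification $[S^{[l]}]^{\vir}=(-1)^{l}e(\Omega^{2[l]}_{S})\cap[S^{[l]}]$ of (\ref{eq:virtualclass}) before \cite{EGL} can be applied --- is made explicit there. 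The only genuine divergence is the endgame. You stop once the answer is a polynomial in $c_{1}(\cf_{i})c_{1}(S)$, $c_{1}(\ce)c_{1}(S)$, $c_{1}(S)^{2}$ and pass to the stated variables by the invertible, rank-dependent substitution $c_{1}(\cf_{i})c_{1}(S)=\frac{1}{r}\bigl(c_{1}(\ce\otimes\cf_{i})c_{1}(S)-\mathrm{rk}(\cf_{i})\,c_{1}(\ce)c_{1}(S)\bigr)$; this is legitimate and proves the theorem exactly as stated, precisely because $c_{1}(\ce)c_{1}(S)$ is on the allowed list of variables. The paper instead invokes \ref{twoa} a second time, now over the curve, to obtain the functional equation $U_{C}(\cf\vert_{C}\otimes\cl,\ce\vert_{C})=U_{C}(\cf\vert_{C},\ce\vert_{C}\otimes\cl)$, whose coefficient identities (the relations among the $n_{i,j,k}$) force $U_{C}$ to be a polynomial in the genus and the numbers $c_{1}(\ce\vert_{C}\otimes\cf_{i}\vert_{C})$ alone, with no separate dependence on $c_{1}(\ce\vert_{C})$. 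That extra computation buys a strictly stronger structural conclusion --- the one recorded in the remark following the proof, that $c_{1}(\ce)c_{1}(S)$ can occur only when the $c_{1}(\cf_{i})$ are tied to $c_{1}(\ce)$ --- and it is this forced tensor-product dependence that partially explains the symmetry $S_{\ce}^{\cf}=S^{\ce}_{\cf}$ of \Cref{Segre}; your linear-algebra shortcut is quicker but recovers only the literal statement, not this refinement.
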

\begin{proof}
	Since $\cf_{1}^{[l]}, \ldots, \cf^{[l]}_m$ and $T^{\vir}_{\Quot^{l}_{S}(\ce)}$ are compatible with \ref{twoa}, we may assume that $\ce$ is globally generated.
	Consider an exact sequence of the form
	\begin{equation}\label{eq:auxses}
		0\rightarrow\ck\rightarrow\co^{\oplus n}\rightarrow\ce\rightarrow 0.
	\end{equation}
	By \Cref{fundamentalclass}, \Cref{compatibility} and (\ref{eq:secondexact}), we have
	$$\int_{[\Quot^{l}_{S}(\ce)]^{\vir}} P=\int_{[\Quot^{l}_{S}(\co^{\oplus n})]^{\vir}} P^{'}$$
	for a polynomial $P^{'}$ in the Chern classes of $\cf_{1}^{[l]}, \ldots, \cf^{[l]}_m, \ck^{\vee[l]}$ and $T^{\vir}_{\Quot^{l}_{S}(\co^{\oplus n})}$.
	As these are compatible with \ref{threea}, we have by the virtual torus localization formula of Graber-Pandharipande \cite{GP}
	\begin{equation*}
	\int_{[\Quot^{l}_{S}(\co^{\oplus n})]^{\vir}} P^{'}=\int_{[S_{n}^{[l]}]^{\vir}} P^{''}
	\end{equation*}
	for a polynomial $P^{''}$ in the Chern classes of $\cf_{1}^{[l]}, \ldots, \cf^{[l]}_m, \ck^{\vee[l]}$ and the tangent bundle of the disconnected surface $S_n=\coprod_{i=1}^{n} S$. By Serre duality, the obstruction sheaf of $S^{[l]}$ can be identified with the dual of the tautological sheaf $\Omega^{2[l]}_{S}$ (see the proof of Lemma 4.1 in \cite{OP}), and it follows from \Cref{smooth} that the virtual class of $S^{[l]}$ is given by
	\begin{equation}\label{eq:virtualclass}
	[S^{[l]}]^{\vir}=(-1)^{l}e(\Omega^{2[l]}_{S})\cap [S^{[l]}].
	\end{equation}
	We can thus apply the universality theorem for tautological integrals over Hilbert scheme of points \cite{EGL}, and using $c(\ck)=c(\ce)^{-1}$ we obtain that the integral
	\begin{equation*}
	\int_{[\Quot^{l}_{S}(\ce)]^{\vir}} P
	\end{equation*}
	is given by a universal polynomial $U$ in the intersection numbers given by $\ce,\cf_{1},\ldots,\cf_{m}$ and $S$. It remains to show that
	only the intersection numbers
	$$c_{1}(\ce\otimes\cf_{1})c_{1}(S), \ldots, c_{1}(\ce\otimes\cf_{m})c_{1}(S), c_{1}(\ce)c_{1}(S), c_{1}(S)^{2}$$
	can occur. Indeed, the polynomial $U$ can be determined by considering surfaces $S$ containing a smooth canonical curve $C$, since there is an abundance of such surfaces $S$ (\Cref{abundance}).
	By \Cref{explicitquot}, \Cref{auxiliary} and \Cref{compatibility}
	\begin{equation*}
	\int_{[\Quot^{l}_{S}(\ce)]^{\vir}} P=\int_{\Quot^{l}_{C}(\ce\vert_{C})}P_{C} 
	\end{equation*}
	for a polynomial $P_{C}$ in the Chern classes of $\cf_{1}\vert_{C}^{[l]}, \ldots, \cf_{m}\vert_{C}^{[l]}$, $R\SHom_{\pi}(\cq_{C},\cq_{C}(C))$, and $\Quot^{l}_{C}(\ce\vert_{C})$. The argument above,
	mutatis mutandis, also shows that $\int_{\Quot^{l}_{C}(\ce\vert_{C})}P_{C}$ is given by a universal polynomial $U_{C}$ in the genus $g$ of $C$ and the intersection numbers
	$$c_{1}(\ce\vert_{C}), c_{1}(\cf_{1}\vert_{C}), \ldots, c_{1}(\cf_{m}\vert_{C}).$$
	As $U=U_{C}$ after
	$$g=1+c_{1}(S)^{2}, c_{1}(\ce\vert_{C})=-c_{1}(\ce)c_{1}(S), c_{1}(\cf_{i}\vert_{C})=-c_{1}(\cf_{i})c_{1}(S),$$
	it suffices to show that $U_{C}$ is a polynomial in $g$ and
	$c_{1}(\ce\vert_{C}\otimes\cf_{i}\vert_{C})$. For simplicity of notation, we take $m=1$ and $U_{C}$ to be homogenous of degree $d$,
	\begin{equation}\label{eq:U}
		U_{C}=\sum_{i+j+k=d} n_{i,j,k}c_{1}(\cf\vert_{C})^{i}c_{1}(\ce\vert_{C})^{j} g^{k}.
	\end{equation}
	By \ref{twoa} and \Cref{compatibility}, the polynomial $U_{C}=U_{C}(\cf\vert_{C},\ce\vert_{C})$ satisfies
	$$U_{C}(\cf\vert_{C}\otimes\cl,\ce\vert_{C})=U_{C}(\cf\vert_{C},\ce\vert_{C}\otimes\cl)$$
	for any invertible sheaf $\cl$ on $C$. Taking the coefficient of $c_{1}(\cl)^{m}c_{1}(\cf\vert_{C})^{i}c_{1}(\ce\vert_{C})^{j} g^{k}$ gives
	\begin{equation*}
		n_{i,j+m,k}=n_{i+m,j,k}\left(\frac{f}{r}\right)^{m}\frac{(i+m)!j!}{(j+m)!i!},
	\end{equation*}
	where $f$ is the rank of $\cf$. In particular
	\begin{equation*}
	n_{i,j,k}=n_{i+j,0,k}\left(\frac{f}{r}\right)^{j}\binom{i+j}{i}.
	\end{equation*}
	Substituting this into (\ref{eq:U}), we obtain
	\begin{equation*}
		U_{C}=\sum_{i=0}^{d}\frac{n_{i,0,d-i}}{r^{i}}c_{1}(\ce\vert_{C}\otimes\cf\vert_{C})^{i} g^{d-i}. \qedhere
	\end{equation*}
\end{proof}
It is clear from the proof that  $c_{1}(\ce)c_{1}(S)$ can only occur if the first Chern classes of the $\cf_{i}$ can be expressed
in terms of the first Chern class of $\ce$.
\begin{remark}
	As pointed out to us by R. Thomas, one can also approach \Cref{universality} as follows.
	Assume for simplicity that  $\ce$ has rank $r=2$; as in the proof above, we may assume that $\ce$ is globally generated.
	By Bertini's theorem, the zero scheme $Z=Z(s)$ of a generic section $s$ of $\ce$ consists of
	$$n=\int_{S} e(\ce)$$
	reduced points, and the Koszul complex of $s$ gives a nontrivial extension
	$$0\rightarrow\co\rightarrow \ce\rightarrow\ci_{Z}\otimes\det(\ce)\rightarrow 0.$$
	By a standard construction, this yields a deformation of $\ce$ into the torsion-free sheaf $\co\oplus\ci_{Z}\otimes\det(\ce)$,
	inducing a deformation of $\Quot_{S}^{l}(\ce)$ into $\Quot^{l}_{S}(\co\oplus\ci_{Z}\otimes\det(\ce))$.
	One could then use \ref{threea} and torus localization, as well as the results of \cite{EGL}, as the Quot scheme $\Quot^{l}_{S}(\ci_{Z})$
	can be identified with the locus of all $[Z']$ in $S^{[n+l]}$ such that $Z'$ contains $Z$.
      \end{remark}

\section{Applications}

\subsection{Virtual Euler characteristic} The topological Euler characteristic of $\Quot^l_{S}(\ce)$ is given by G\"{o}ttsche's formula
\begin{equation*}
\sum_{l=0}^\infty \chi(\Quot^l_{S}(\ce))q^l=\left(\prod_{m=1}^{\infty}\frac{1}{1-q^{m}}\right)^{r\chi(S)}.
\end{equation*}
Indeed, using $\chi(\Quot^l_{S}(\ce))=\chi(\Quot^l_{S}(\co^{\oplus r}))$ --- which follows for instance from \cite{Ricolfi}) ---- and \ref{three},
one can reduce to the $r=1$ case originally considered by G\"{o}ttsche \cite{Go1}.
By analogy with the Chern–Gauss–Bonnet theorem, one can define the virtual Euler characteristic of $\Quot^{l}_{S}(\ce)$ as
\begin{equation*}
\chi^{\vir}(\Quot^{l}_{S}(\ce))=\int_{[\Quot^{l}_{S}(\ce)]^{\vir}} c(T_{\Quot^{l}_{S}(\ce)}^{\vir}),
\end{equation*}
following Fantechi and G\"{o}ttsche \cite{FG}. For these invariants, the generating series is given by a rational function:
\begin{theorem}[Oprea-Pandharipande \cite{OP}]
	We have
	\begin{equation*}
		\sum_{l=0}^\infty \chi^{\vir}(\Quot^l_{S}(\co^{\oplus r}))q^l=\left(\frac{(1-q)^{2r}}{(1-2^r q)^r}\prod_{i<j}\left(1-(x_i-x_j)^{2}\right)\right)^{c_1(S)^2},
	\end{equation*}
	where $x_1,\ldots,x_r$ are the roots of $x^r-q(x-1)^r=0$. 
\end{theorem}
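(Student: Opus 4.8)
The plan is to reduce the computation to a tautological integral over symmetric products of a curve, where everything is classical. Since \Cref{universality}, applied with $P=c(T^{\vir}_{\Quot^{l}_{S}(\co^{\oplus r})})$ and $\ce=\co^{\oplus r}$ (so that $c_{1}(\ce)=0$), already shows that $\chi^{\vir}(\Quot^{l}_{S}(\co^{\oplus r}))$ is a universal polynomial in $c_{1}(S)^{2}$ alone, it suffices to establish the formula for surfaces $S$ carrying a smooth canonical curve $C\in|\Omega^{2}_{S}|$; these are abundant by \Cref{abundance}, and $c_{1}(S)^{2}$ takes infinitely many values among them. For such an $S$, \Cref{explicitquot} gives $[\Quot^{l}_{S}(\co^{\oplus r})]^{\vir}=(-1)^{l}\iota_{\ast}[\Quot^{l}_{C}(\co_{C}^{\oplus r})]$, and feeding the triangle of \Cref{auxiliary} into the definition of $\chi^{\vir}$ yields
$$\chi^{\vir}(\Quot^{l}_{S}(\co^{\oplus r}))=(-1)^{l}\int_{\Quot^{l}_{C}(\co_{C}^{\oplus r})}c\big(T_{\Quot^{l}_{C}(\co_{C}^{\oplus r})}\big)\,c\big(R\SHom_{\pi}(\cq_{C},\cq_{C}(C))\big),$$
an honest integral over the smooth projective variety $\Quot^{l}_{C}(\co_{C}^{\oplus r})$ of dimension $lr$. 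Here $\co_{C}(C)=\Omega^{2}_{S}\vert_{C}$, so adjunction gives $g-1=c_{1}(S)^{2}$ for the genus $g$ of $C$; this is the only invariant of $C$ that will survive, and it is what produces the exponent $c_{1}(S)^{2}$.

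Next I would localize this integral. By property \ref{threea} the torus $(\mathbb{C}^{\ast})^{r}\subset\operatorname{Aut}(\co_{C}^{\oplus r})$ acts on the smooth variety $\Quot^{l}_{C}(\co_{C}^{\oplus r})$, with fixed locus the disjoint union
$$\coprod_{l_{1}+\cdots+l_{r}=l}\ \prod_{i=1}^{r}C^{[l_{i}]}=\coprod_{l_{1}+\cdots+l_{r}=l}\ \prod_{i=1}^{r}\operatorname{Sym}^{l_{i}}(C),$$
indexed by how the length-$l$ quotient distributes among the $r$ summands. The virtual tangent complex restricts on a fixed component to $\bigoplus_{i,j}R\SHom_{\pi}(\cs_{i},\cq_{j})\otimes t_{j}t_{i}^{-1}$, where $\cs_{i},\cq_{j}$ are the universal sub- and quotient sheaves of the factors and $t_{1},\dots,t_{r}$ are the torus characters; the twisted factor $R\SHom_{\pi}(\cq_{C},\cq_{C}(C))$ splits analogously. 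The weight-zero part $i=j$ recovers the tangent bundle of $\prod_{i}\operatorname{Sym}^{l_{i}}(C)$, while the moving part $i\neq j$ assembles into the normal bundle, whose equivariant Euler class enters the denominator. Atiyah--Bott localization then expresses each $\chi^{\vir}$ as a finite sum, over compositions $(l_{1},\dots,l_{r})$, of tautological integrals over $\prod_{i}\operatorname{Sym}^{l_{i}}(C)$ weighted by the characters $t_{j}t_{i}^{-1}$.

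The remaining integrals over symmetric products of $C$ are entirely classical: by Macdonald's description of the cohomology of $\operatorname{Sym}^{n}(C)$ and of its tautological classes, each evaluates to a polynomial in $g$ and the degrees of the relevant line bundles, and the associated generating series in $q$ carries the standard $(g-1)$-power structure familiar from $\sum_{n}\chi(\operatorname{Sym}^{n}C)\,q^{n}=(1-q)^{2g-2}$. This is exactly what yields the overall exponent $g-1=c_{1}(S)^{2}$, in agreement with \Cref{universality}. Assembling the localization sum over all compositions, the diagonal characters organize into the quantities $x_{1},\dots,x_{r}$ and the off-diagonal contributions into pairwise factors; passing to the nonequivariant limit produces the symmetric rational function whose $(g-1)$-st power is the asserted series, with $x_{1},\dots,x_{r}$ recognizable as the roots of $x^{r}-q(x-1)^{r}=0$.

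The main obstacle is the final resummation and the closed-form identification. Three points need care: (i) checking that the nonequivariant limit of the localization sum exists, i.e. that the apparent poles in the torus characters cancel once one sums over all fixed components; (ii) the generating-function bookkeeping, so that the double sum over compositions $(l_{1},\dots,l_{r})$ and over partitions collapses to the compact product form; and (iii) recognizing the resulting symmetric function of the characters as $\frac{(1-q)^{2r}}{(1-2^{r}q)^{r}}\prod_{i<j}\big(1-(x_{i}-x_{j})^{2}\big)$ with $x_{i}$ the roots of $x^{r}-q(x-1)^{r}=0$. This symmetric-function identity is the genuinely computational heart of the theorem; the reductions above are what turn it into a finite, curve-level calculation.
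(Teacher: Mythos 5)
First, a point of framing: the paper does not prove this statement at all --- it is quoted from Oprea--Pandharipande \cite{OP} and used as a black box (the paper's own contribution in this direction, \Cref{independence}, reduces the case of general $\ce$ to this formula). So your proposal has to be judged as a reconstruction of the proof in \cite{OP}. Your reduction steps are sound and are in fact close in spirit to what Oprea and Pandharipande do: using \Cref{universality} to see that $\chi^{\vir}(\Quot^{l}_{S}(\co^{\oplus r}))$ is a universal polynomial in $c_{1}(S)^{2}$ (correct, since $c_{1}(\ce)=0$ and there are no auxiliary $\cf_{i}$); specializing to surfaces carrying a smooth canonical curve, which realize unboundedly many values of $c_{1}(S)^{2}$ and hence determine the polynomial; passing to the smooth curve Quot scheme via \Cref{explicitquot} and \Cref{auxiliary}, with $g-1=c_{1}(S)^{2}$ by adjunction and Chern classes multiplying across the exact triangle; and localizing with respect to $(\mathbb{C}^{\ast})^{r}$ onto $\coprod_{l_{1}+\cdots+l_{r}=l}\prod_{i}\operatorname{Sym}^{l_{i}}(C)$. (Your worry (i) is a non-issue: for a smooth projective variety the Atiyah--Bott sum over fixed loci equals the integral identically, hence is constant in the equivariant parameters, so no limiting procedure is needed.)

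The genuine gap is that your points (ii) and (iii) --- the resummation of the localization contributions over all compositions $(l_{1},\ldots,l_{r})$ and the identification of the resulting symmetric function as $(1-q)^{2r}(1-2^{r}q)^{-r}\prod_{i<j}\bigl(1-(x_{i}-x_{j})^{2}\bigr)$ with $x_{i}$ the roots of $x^{r}-q(x-1)^{r}=0$ --- are precisely the content of the theorem, and the proposal neither carries them out nor indicates the mechanism (Lagrange--B\"urmann inversion applied to the generating series of the symmetric-product integrals) by which algebraic roots of $x^{r}-q(x-1)^{r}=0$ can emerge from a localization sum at all; nothing in your outline explains why the answer should be algebraic in $q$ rather than merely a power series. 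Everything up to that point only converts one integral into a finite family of other integrals; in \cite{OP} the closed-form evaluation is where the real work lies and occupies a substantial combinatorial analysis. As written, your argument establishes that the generating series has the shape $B^{c_{1}(S)^{2}}$ for some universal power series $B$ --- essentially the statement-level content of \Cref{independence} --- but not that $B$ is the asserted rational function, so it is an outline of a correct strategy rather than a proof.
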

Our results allow us to generalize this formula to $\Quot^{l}_{S}(\ce)$ for arbitrary $\ce$.
\begin{proposition}\label{independence}
	We have 
	$$\chi^{\vir}(\Quot^{l}_{S}(\ce))=\chi^{\vir}(\Quot^{l}_{S}(\co^{\oplus r})).$$
\end{proposition}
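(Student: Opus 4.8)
The plan is to apply the universality theorem \Cref{universality} directly to the integrand $c(T^{\vir}_{\Quot^{l}_{S}(\ce)})$ that defines the virtual Euler characteristic. This integrand is a polynomial in the Chern classes of the virtual tangent bundle alone, with no tautological sheaves $\cf_{i}^{[l]}$ appearing; that is, we are in the case $m=0$ of \Cref{universality}, so the integrand is admissible.

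First I would invoke \Cref{universality} to conclude that
\begin{equation*}
\chi^{\vir}(\Quot^{l}_{S}(\ce))=\int_{[\Quot^{l}_{S}(\ce)]^{\vir}} c(T^{\vir}_{\Quot^{l}_{S}(\ce)})
\end{equation*}
is given by a universal polynomial in the two intersection numbers $c_{1}(\ce)c_{1}(S)$ and $c_{1}(S)^{2}$, with coefficients depending on $\ce$ only through its rank $r$. Since this universal polynomial has the same coefficients for every surface $S$ (it depends on $\ce$ only through $r$, and on nothing else), it records a genuine polynomial identity valid across all surfaces.

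The main step is to eliminate the dependence on $c_{1}(\ce)c_{1}(S)$, and here property \ref{twoa} is exactly what is needed. For any invertible sheaf $\cl$ on $S$, the isomorphism $\Quot^{l}_{S}(\ce)\xrightarrow{\sim}\Quot^{l}_{S}(\ce\otimes\cl)$ identifies the two schemes together with their perfect obstruction theories, hence their virtual tangent bundles, so that $\chi^{\vir}(\Quot^{l}_{S}(\ce))=\chi^{\vir}(\Quot^{l}_{S}(\ce\otimes\cl))$. As $c_{1}(\ce\otimes\cl)c_{1}(S)=c_{1}(\ce)c_{1}(S)+r\,c_{1}(\cl)c_{1}(S)$, applying the universal polynomial of the previous paragraph to both sides forces it to be constant in the variable $c_{1}(\ce)c_{1}(S)$; this is precisely the phenomenon recorded in the remark following \Cref{universality}, applied with no $\cf_{i}$ present. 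Consequently $\chi^{\vir}(\Quot^{l}_{S}(\ce))$ depends only on $r$ and $c_{1}(S)^{2}$.

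Finally, since $\co^{\oplus r}$ has rank $r$ and lives on the same surface $S$, the two virtual Euler characteristics are evaluations of the same polynomial at the same values of $r$ and $c_{1}(S)^{2}$, giving $\chi^{\vir}(\Quot^{l}_{S}(\ce))=\chi^{\vir}(\Quot^{l}_{S}(\co^{\oplus r}))$. The only genuinely delicate point is the elimination of $c_{1}(\ce)c_{1}(S)$: one must ensure that the vanishing of its coefficient follows from the tensor-invariance \ref{twoa} together with the fact that the universal polynomial is valid over all surfaces, so that enough values of $c_{1}(\cl)c_{1}(S)$ are realizable to force the coefficient to vanish identically rather than merely on a single $S$.
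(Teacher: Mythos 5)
Your proof is correct, and it reaches the conclusion by a leaner route than the paper's. Both arguments rest on the same two pillars --- \Cref{universality} with $m=0$ applied to $c(T^{\vir}_{\Quot^{l}_{S}(\ce)})$, and the invariance of the perfect obstruction theory under the twisting isomorphism \ref{twoa} --- but they implement the elimination of $c_{1}(\ce)c_{1}(S)$ differently. The paper first upgrades the polynomial dependence to a multiplicative one: using the factorization of $\Quot^{l}_{S}(\ce)$ over a disjoint union $S=S_{1}\sqcup S_{2}$ and the cobordism argument of \cite{EGL}, it writes the generating series as $A^{c_{1}(\ce)c_{1}(S)}B^{c_{1}(S)^{2}}$ for power series $A,B$; the twisting invariance then yields $A^{rc_{1}(\cl)c_{1}(S)}=1$, whence $A=1$ as a power-series identity. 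You skip the multiplicativity/cobordism step entirely and argue at the level of the universal polynomial itself: invariance under twisting forces the polynomial to take equal values along an arithmetic progression in its first argument, hence to be constant there. This is more elementary and fully sufficient for the proposition; what the paper's stronger exponential form buys is reusability, since the same structure is invoked verbatim in the proofs of \Cref{Segre} and \Cref{Euler}. One refinement to your closing caveat: you do not actually need realizability of values across many surfaces. On the fixed surface $S$, if $c_{1}(S)$ is not numerically trivial, choose $\cl$ with $t=c_{1}(\cl)c_{1}(S)\neq 0$; replacing $\cl$ by $\cl^{\otimes k}$ for all $k\in\mathbb{Z}$ shows that the one-variable polynomial $x\mapsto U(x,c_{1}(S)^{2})$ takes the value $U(c_{1}(\ce)c_{1}(S),c_{1}(S)^{2})$ at the infinitely many points $c_{1}(\ce)c_{1}(S)+rkt$, hence is constant, and in particular equals its value at $x=0$, which is $\chi^{\vir}(\Quot^{l}_{S}(\co^{\oplus r}))$; while if $c_{1}(S)$ is numerically trivial, then $c_{1}(\ce)c_{1}(S)=0=c_{1}(\co^{\oplus r})c_{1}(S)$ and the two sides are evaluations of $U$ at the same point, so the statement is immediate.
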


\begin{proof}
	Indeed, if $S$ is the disjoint union of surfaces $S_{1}$ and $S_{2}$, then
	$$\Quot^{l}_{S}(\ce)=\coprod_{l_{1}+l_{2}=l}\Quot^{l_{1}}_{S_{1}}(\ce_{1})\times\Quot^{l_{2}}_{S_{2}}(\ce_{2}),$$
	where $\ce_{1}$ and $\ce_{2}$ are the restrictions of $\ce$ to $S_{1}$ and $S_{2}$, respectively. It is clear
	this splitting is compatible with the perfect obstruction theory, and hence 
	\begin{equation*}
		\sum_{l=0}^{\infty}q^{l}\chi^{\vir}(\Quot^{l}_{S}(\ce))=\sum_{l=0}^{\infty}q^{l}\chi^{\vir}(\Quot^{l}_{S_{1}}(\ce_{1}))\sum_{l=0}^{\infty}q^{l}\chi^{\vir}(\Quot^{l}_{S_{2}}(\ce_{2})).
	\end{equation*}
	This multiplicativity property, \Cref{universality}, and a standard cobordism argument \cite{EGL} show that
	\begin{equation*}
		\sum_{l=0}^{\infty}q^{l}\chi^{\vir}(\Quot^{l}_{S}(\ce))=A^{c_{1}(\ce)c_{1}(S)}B^{c_{1}(S)^{2}}
	\end{equation*}
	for power series $A$ and $B$. As the perfect obstruction theory of $\Quot^{l}_{S}(\ce)$
	is invariant under \ref{twoa}, we obtain
	$$\chi^{\vir}(\Quot^{l}_{S}(\ce\otimes\cl))=\chi^{\vir}(\Quot^{l}_{S}(\ce))$$
	for any invertible sheaf $\cl$ on $S$. Then $c_{1}(\ce\otimes\cl)=c_{1}(\ce)+rc_{1}(\cl)$ gives
	$$A^{rc_{1}(\cl)c_{1}(S)}=1,$$
	which implies $A=1$.
\end{proof}

\begin{remark}
It is clear that $\Quot^{l}_{S}(\ce)$ and $\Quot^{l}_{S}(\co^{\oplus r})$ are locally isomorphic.
Mutatis mutandis, our proof shows that any that any tautological integral over $[\Quot^{l}_{S}(\ce)]^{\vir}$ which is invariant under \ref{twoa} --- for example any integral in the Chern classes of the virtual tangent bundle --- depends on $\ce$ only through its rank $r$. 
\end{remark}

\subsection{Segre integrals}
For any locally free sheaf $\cf$ consider the generating series
\begin{equation*}
	S_{\ce}^{\cf}(q)=\sum_{l=0}^{\infty} (-1)^{lr}q^{l}\int_{[\Quot^{l}_{S}(\ce)]^{\vir}} s(\cf^{[l]}).
\end{equation*}
of Segre integrals. As observed by Oprea and Pandharipande \cite{OP} (in a special case, later generalised in \cite{AJLOP}), this Segre series has a remarkable symmetry property, which can be stated as follows.
\begin{proposition}\label{Segre}
	We have 
	$$S_{\ce}^{\cf}=S^{\ce}_{\cf}.$$
\end{proposition}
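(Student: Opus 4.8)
The plan is to combine the universality of \Cref{universality} with the multiplicativity/cobordism argument already employed in \Cref{independence}, so as to reduce the asserted symmetry to the single case $\ce=\co^{\oplus r}$ settled in \cite{AJLOP}. Write $r=\mathrm{rk}(\ce)$ and $f=\mathrm{rk}(\cf)$, and recall that the right hand side is
\[
S^{\ce}_{\cf}(q)=\sum_{l=0}^{\infty}(-1)^{lf}q^{l}\int_{[\Quot^{l}_{S}(\cf)]^{\vir}}s(\ce^{[l]}).
\]
Since $s(\cf^{[l]})$ is a polynomial in the Chern classes of the single tautological sheaf $\cf^{[l]}$, \Cref{universality} — together with the remark following it, which removes the intersection number $c_{1}(\ce)c_{1}(S)$ precisely because $c_{1}(\cf)$ is not expressible through $c_{1}(\ce)$ — shows that $\int_{[\Quot^{l}_{S}(\ce)]^{\vir}}s(\cf^{[l]})$ depends on the geometry only through $a=c_{1}(\ce\otimes\cf)c_{1}(S)$ and $b=c_{1}(S)^{2}$, with coefficients depending only on $r$ and $f$. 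The structural point driving the whole argument is that $a$ is \emph{symmetric} under interchanging $\ce$ and $\cf$ (while swapping $r$ and $f$), because $c_{1}(\ce\otimes\cf)=f\,c_{1}(\ce)+r\,c_{1}(\cf)=c_{1}(\cf\otimes\ce)$, and $b$ is manifestly unchanged.

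First I would promote this polynomiality to an exponential form. Exactly as in \Cref{independence}, the signed Segre series is multiplicative under disjoint unions of surfaces: under $S=S_{1}\sqcup S_{2}$ the Quot scheme decomposes, the sign $(-1)^{lr}=(-1)^{l_{1}r}(-1)^{l_{2}r}$ and the tautological Segre classes factor, and the invariants $a,b$ are additive. Combined with the coefficientwise polynomiality in $a,b$ just obtained, the standard cobordism argument of \cite{EGL} yields universal power series $A_{r,f},B_{r,f}\in 1+q\,\mathbb{Q}[[q]]$, depending only on the ranks, with
\[
S_{\ce}^{\cf}(q)=A_{r,f}(q)^{\,a}\,B_{r,f}(q)^{\,b}.
\]
Running the same reasoning with the roles of $\ce$ and $\cf$ exchanged, and using the symmetry of $a$ and $b$, gives $S^{\ce}_{\cf}(q)=A_{f,r}(q)^{\,a}B_{f,r}(q)^{\,b}$. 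Thus the desired identity $S_{\ce}^{\cf}=S^{\ce}_{\cf}$ reduces to the equalities of universal power series $A_{r,f}=A_{f,r}$ and $B_{r,f}=B_{f,r}$.

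To pin these down I would feed in the known case. By \cite{AJLOP} we have $S_{\co^{\oplus r}}^{\cf}=S^{\co^{\oplus r}}_{\cf}$ for every $\cf$ and every surface; since $c_{1}(\co^{\oplus r})=0$ gives $a=r\,c_{1}(\cf)c_{1}(S)$, writing $u=c_{1}(\cf)c_{1}(S)$ and $v=c_{1}(S)^{2}$ this reads $A_{r,f}^{\,ru}B_{r,f}^{\,v}=A_{f,r}^{\,ru}B_{f,r}^{\,v}$. All four series lie in $1+q\,\mathbb{Q}[[q]]$, so I may take logarithms: choosing $u=0$ (e.g.\ $\cf=\co^{\oplus f}$ on a surface with $c_{1}(S)^{2}\neq 0$) forces $B_{r,f}=B_{f,r}$, and then any realizable $u\neq 0$ (e.g.\ $S=\bp^{2}$, $\cf=\co(1)^{\oplus f}$) forces $A_{r,f}=A_{f,r}$; substituting back gives the proposition. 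The step demanding the most care is precisely this last one: one must check that the realizable pairs $(u,v)$ are varied enough to separate the universal series — this is where the abundance of surfaces from \Cref{abundance} and the freedom to twist $\cf$ enter — and one must track the sign $(-1)^{lr}$ consistently through the multiplicativity so that the exponential form is genuinely valid. Everything else is a formal consequence of \Cref{universality} and the cobordism argument.
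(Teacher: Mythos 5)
Your overall architecture is the same as the paper's: use \Cref{universality} together with the multiplicativity/cobordism argument of \Cref{independence} to put the series in exponential form $S_{\ce}^{\cf}=A_{r,f}^{a}B_{r,f}^{b}$ with $a=c_{1}(\ce\otimes\cf)c_{1}(S)$, $b=c_{1}(S)^{2}$, observe that $a$ and $b$ are symmetric under exchanging $\ce$ and $\cf$, reduce the proposition to the rank symmetries $A_{r,f}=A_{f,r}$ and $B_{r,f}=B_{f,r}$, and extract those from \cite{AJLOP}. Your exponent-separation step (varying realizable pairs $(u,v)$, taking logarithms of series with constant term $1$) is correct and makes explicit what the paper leaves implicit.

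The genuine gap is the form of the input you attribute to \cite{AJLOP}. You take as known that $S_{\co^{\oplus r}}^{\cf}=S^{\co^{\oplus r}}_{\cf}$ for \emph{every} locally free $\cf$ and every surface. But the right-hand side of that identity is a series of integrals over $[\Quot^{l}_{S}(\cf)]^{\vir}$ for an arbitrary bundle $\cf$, and \cite{AJLOP}, like \cite{OP}, works exclusively with Quot schemes of trivial bundles; virtual classes of $\Quot^{l}_{S}(\cf)$ for general $\cf$ are precisely the subject of \cite{Stark} and of the present paper, so this statement cannot be found in \cite{AJLOP}. Moreover, to even write $S^{\co^{\oplus r}}_{\cf}$ in exponential form — which you need in order to exploit the premise — you invoke \Cref{universality} for arbitrary bundles, and at that point your premise is equivalent (by your own separation argument, run in reverse) to the desired conclusion $A_{r,f}=A_{f,r}$, $B_{r,f}=B_{f,r}$: as written, the proof assumes a statement equivalent to the one being proven and misattributes it to a reference that cannot contain it. The repair is to quote \cite{AJLOP} for what its combinatorial argument actually establishes, namely the rank symmetry entirely inside the trivial-bundle world — the invariance of the universal series under exchanging the rank of the bundle with the rank of the tautological input, which is exactly what the paper cites; concretely one may use an instance such as $S_{\co^{\oplus r}}^{\cl^{\oplus f}}=S_{\co^{\oplus f}}^{\cl^{\oplus r}}$ for line bundles $\cl$ (note that by \ref{twoa} and \Cref{compatibility} this is the case $\ce=\co^{\oplus r}$, $\cf=\cl^{\oplus f}$ of the proposition, rewritten so that both sides involve only trivial bundles). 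Feeding this into your separation step, with $a=rf\,c_{1}(\cl)c_{1}(S)$ on both sides, gives $A_{r,f}^{rf}=A_{f,r}^{rf}$ and $B_{r,f}=B_{f,r}$, hence $A_{r,f}=A_{f,r}$ since both series have constant term $1$, and the proposition follows.
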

\begin{proof}
As in the proof of \Cref{independence}, one shows that
	$$S_{\ce}^{\cf}(q)=A^{c_{1}(\ce\otimes\cf)c_{1}(S)^{2}} B^{c_{1}(S)^{2}}$$
	for power series $A$ and $B$ depending only on the ranks of $\ce$ of $\cf$. 
	The combinatorial argument of \cite{AJLOP} in the case $\ce=\co^{\oplus r}$ shows that $A$ and $B$ are invariant
	under exchanging the ranks of $\ce$ and $\cf$.
\end{proof}

Along the same lines, we obtain the following result. (Use \Cref{universality} to reduce to the case $\ce=\co^{\oplus r}$ dealt with in Corollary 1.16 of \cite{OP}.)
\begin{proposition}
	For any invertible sheaf $\cl$ on $S$
	\begin{equation*}
		\sum_{l=0}^{\infty} q^{l}\int_{[\Quot^{l}_{S}(\ce)]^{\vir}} s(\cl^{[l]})=\left(1+p\right) ^{c_{1}(\ce\otimes\cl)c_{1}(S)}\left(\frac{1+(r+1)p}{(1+p)^{r+1}} \right)^{c_{1}(S)^{2}}
	\end{equation*}
	under $q=(-1)^{r+1}p(1+p)^{r}$.
\end{proposition}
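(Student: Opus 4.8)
The plan is to mirror the proofs of \Cref{independence} and \Cref{Segre}, reducing to the case $\ce=\co^{\oplus r}$ of \cite{OP}. Write
\[
F(\ce,\cl)=\sum_{l=0}^{\infty} q^{l}\int_{[\Quot^{l}_{S}(\ce)]^{\vir}} s(\cl^{[l]}).
\]
The Segre class $s(\cl^{[l]})$ is a polynomial in the Chern classes of the tautological sheaf $\cl^{[l]}$, so \Cref{universality} (with $m=1$ and $\cf_{1}=\cl$) shows that each coefficient of $F(\ce,\cl)$ is a universal polynomial, depending on $\ce$ and $\cl$ only through their ranks, in $c_{1}(\ce\otimes\cl)c_{1}(S)$, $c_{1}(\ce)c_{1}(S)$ and $c_{1}(S)^{2}$. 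First I would combine this with the multiplicativity of $F$ under disjoint unions of surfaces --- note that $s(\cl^{[l]})$ splits as a product on the components of $\Quot^{l}_{S_{1}\sqcup S_{2}}(\ce)$ --- and the standard cobordism argument of \cite{EGL}, exactly as in \Cref{independence}, to obtain the product form
\[
F(\ce,\cl)=A^{c_{1}(\ce\otimes\cl)c_{1}(S)}\,B^{c_{1}(\ce)c_{1}(S)}\,D^{c_{1}(S)^{2}}
\]
for power series $A,B,D$ in $q$ that depend only on $r$.

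Next I would eliminate the factor $B$. Under the isomorphism $\Quot^{l}_{S}(\ce)\simeq\Quot^{l}_{S}(\ce\otimes\cm)$ of \ref{twoa}, whose perfect obstruction theory is invariant, \Cref{compatibility} identifies the sheaf $\cl^{[l]}$ on $\Quot^{l}_{S}(\ce\otimes\cm)$ with $(\cl\otimes\cm)^{[l]}$ on $\Quot^{l}_{S}(\ce)$, so that $F(\ce\otimes\cm,\cl)=F(\ce,\cl\otimes\cm)$ for every invertible sheaf $\cm$ on $S$. Since $c_{1}(\ce\otimes\cm)=c_{1}(\ce)+rc_{1}(\cm)$, the two $A$-exponents and the two $D$-exponents agree on both sides, and comparing the $B$-exponents forces $B^{rc_{1}(\cm)c_{1}(S)}=1$ for all $\cm$; hence $B=1$. (This is precisely the instance of the remark following \Cref{universality} in which $\cl$ has rank one and is unrelated to $\ce$.)

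It remains to pin down $A$ and $D$, which depend only on $r$ and may therefore be computed by specializing to $\ce=\co^{\oplus r}$. There $c_{1}(\ce\otimes\cl)=rc_{1}(\cl)$, and Corollary 1.16 of \cite{OP} gives
\[
F(\co^{\oplus r},\cl)=(1+p)^{rc_{1}(\cl)c_{1}(S)}\left(\frac{1+(r+1)p}{(1+p)^{r+1}}\right)^{c_{1}(S)^{2}}
\]
under $q=(-1)^{r+1}p(1+p)^{r}$. Reading off the two factors yields $A=1+p$ and $D=\frac{1+(r+1)p}{(1+p)^{r+1}}$, and substituting these into the product form above proves the claimed formula for arbitrary $\ce$ of rank $r$.

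The main obstacle, as in \Cref{independence}, is establishing the product form: this rests on the multiplicativity of $F$ together with the cobordism argument of \cite{EGL}, and on confirming via \Cref{universality} that no intersection number beyond the three listed can contribute. The only remaining point is to check that $q=(-1)^{r+1}p(1+p)^{r}=(-1)^{r+1}p+O(p^{2})$ is an invertible formal change of variables, so that the identity in $p$ is equivalent to one in $q$; this is immediate. Everything else follows directly from \Cref{universality} and the result of \cite{OP}.
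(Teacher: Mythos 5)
Your proposal is correct and takes essentially the same route as the paper, whose entire proof is the parenthetical instruction to argue ``along the same lines'' as \Cref{independence} and \Cref{Segre} --- i.e., product form via \Cref{universality}, multiplicativity and the cobordism argument, elimination of the $c_{1}(\ce)c_{1}(S)$ factor via \ref{twoa} --- and then to read off the universal series from Corollary 1.16 of \cite{OP} in the case $\ce=\co^{\oplus r}$. Your write-up simply makes these steps explicit, and does so correctly.
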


\begin{remark}
Our proof of \Cref{Segre} only partially explains the symmetry property $S_{\ce}^{\cf}=S^{\ce}_{\cf}$, through the dependence on $\ce\otimes\cf\simeq\cf\otimes\ce$ provided by \Cref{universality}. It would be interesting to obtain a deeper understanding, and we believe that in this regard, the implications of \Cref{fundamentalclass} are yet to be fully explored. It is also a simple matter to deduce from \Cref{fundamentalclass}  the rationality of descendent series over $[\Quot^{l}_{S}(\ce)]^{\vir}$ from the corresponding result in the $\ce=\co^{\oplus r}$ case; the latter is the main theorem of the paper \cite{JOP}. This, \Cref{independence}, and \Cref{Segre} can also be deduced from a wall-crossing formula of Joyce \cite{Joyce}, see \cite{Bojko}.
\end{remark}

\subsection{Top intersections of Euler classes}

Finally, we compute the top intersection of the Euler class  $e(\cl^{[l]})$.
\begin{proposition}\label{Euler}
	For any invertible sheaf $\cl$ on $S$
	\begin{equation*}
		\sum_{l=0}^{\infty}q^{l}\int_{[\Quot^{l}_{S}(\ce)]^{\vir}}e(\cl^{[l]})^{r}=\left(\frac{1}{1-q}\right)^{c_{1}(\ce\otimes\cl)c_{1}(S)}.
	\end{equation*}
\end{proposition}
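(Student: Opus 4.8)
The plan is to combine the universality theorem \Cref{universality} with the rank-reduction of \Cref{fundamentalclass}, reducing everything to a single classical computation on a symmetric product of a curve. Write
\[
I_r(\ce,\cl)=\sum_{l\geq 0}q^{l}\int_{[\Quot^{l}_{S}(\ce)]^{\vir}}e(\cl^{[l]})^{r}.
\]
Since $e(\cl^{[l]})^{r}$ is a polynomial in the Chern classes of the single tautological sheaf $\cl^{[l]}$, \Cref{universality} applies, and as $\cl$ is unrelated to $\ce$, the remark following \Cref{universality} shows that $c_{1}(\ce)c_{1}(S)$ does not occur; thus each coefficient of $I_r$ is a polynomial in $c_{1}(\ce\otimes\cl)c_{1}(S)$ and $c_{1}(S)^{2}$. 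Running the multiplicativity/cobordism argument of \Cref{independence} (on a disjoint union $e(\cl^{[l]})^{r}$ splits as a product, since $\cl^{[l]}$ splits as an external direct sum and the Euler class is multiplicative) then yields
\[
I_r(\ce,\cl)=A_r^{\,c_{1}(\ce\otimes\cl)c_{1}(S)}\,B_r^{\,c_{1}(S)^{2}}
\]
for power series $A_r,B_r$ depending only on the rank $r$.

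The key step is a recursion in $r$ via \Cref{fundamentalclass}. I would apply it to the split bundle $\ce=\cl^{\vee}\oplus\ce^{''}$ with $\ce^{''}$ locally free of rank $r-1$ and kernel $\ce^{'}=\cl^{\vee}$; the point of this choice is that $e(\ce^{'\vee[l]})=e(\cl^{[l]})$, so that the projection formula together with \Cref{compatibility} (the restriction of $\cl^{[l]}$ to $\Quot^{l}_{S}(\ce^{''})$ is again $\cl^{[l]}$) gives
\[
\int_{[\Quot^{l}_{S}(\ce^{''})]^{\vir}}e(\cl^{[l]})^{r-1}=\int_{[\Quot^{l}_{S}(\ce)]^{\vir}}e(\cl^{[l]})^{r},
\]
that is $I_{r-1}(\ce^{''},\cl)=I_r(\cl^{\vee}\oplus\ce^{''},\cl)$. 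Because $\cl^{\vee}\otimes\cl=\co$ we have $c_{1}(\ce\otimes\cl)=c_{1}(\ce^{''}\otimes\cl)$, so comparing the two factored expressions over the abundance of canonical-curve surfaces of \Cref{abundance} forces $A_r=A_{r-1}$ and $B_r=B_{r-1}$. It therefore suffices to treat $r=1$.

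For $r=1$, property \ref{twoa} lets me replace $\ce$ by $\co$ and $\cl$ by $\cn=\ce\otimes\cl$, so that $I_{1}(\ce,\cl)=I_{1}(\co,\cn)=\sum_{l}q^{l}\int_{[S^{[l]}]^{\vir}}c_{l}(\cn^{[l]})$. Choosing a smooth canonical curve $C$ and invoking \Cref{explicitquot} and \Cref{compatibility}, together with $\Quot^{l}_{C}(\co_C)=\mathrm{Sym}^{l}C=C^{(l)}$, gives $\int_{[S^{[l]}]^{\vir}}c_{l}(\cn^{[l]})=(-1)^{l}\int_{C^{(l)}}c_{l}(M^{[l]})$ for $M=\cn|_{C}$ of degree $d=c_{1}(\cn)[C]=-c_{1}(\ce\otimes\cl)c_{1}(S)$. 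The remaining classical input is $\int_{C^{(l)}}c_{l}(M^{[l]})=\binom{d}{l}$: a section $s\in\H^{0}(C,M)$ with $d$ simple zeros induces a section of the rank-$l$ bundle $M^{[l]}$ vanishing exactly on the $\binom{d}{l}$ reduced points of $C^{(l)}$ indexed by the $l$-element subsets of the zero set of $s$, and transversality (for $M$ sufficiently positive, extended to all $d$ by the curve-universality already used in \Cref{universality}) identifies this count with the Euler number. Summing, $I_{1}(\co,\cn)=\sum_{l}(-q)^{l}\binom{d}{l}=(1-q)^{d}=\left(\tfrac{1}{1-q}\right)^{c_{1}(\ce\otimes\cl)c_{1}(S)}$, so $A_{1}=\tfrac{1}{1-q}$ and $B_{1}=1$; combined with the recursion this gives the claimed formula for all $r$.

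I expect the main obstacle to be the bookkeeping that keeps the recursion attached to the correct exponent: one must check that the integrand $e(\cl^{[l]})^{r}$ is exactly preserved under \Cref{fundamentalclass} (which forces $\ce^{'}=\cl^{\vee}$) and that $c_{1}(\ce\otimes\cl)$ is invariant under the rank drop, and separately that the base computation on $C^{(l)}$ produces no $c_{1}(S)^{2}$ dependence, so that $B_r=1$.
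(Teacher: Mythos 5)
Your proof is correct, and its skeleton coincides with the paper's: both start from the factorization $A_r^{c_{1}(\ce\otimes\cl)c_{1}(S)}B_r^{c_{1}(S)^{2}}$ supplied by \Cref{universality} and the multiplicativity/cobordism argument of \Cref{independence}, and both run the rank recursion by applying \Cref{fundamentalclass} to an exact sequence $0\rightarrow\cl^{\vee}\rightarrow\ce\rightarrow\ce^{''}\rightarrow 0$, so that $e(\ce^{'\vee[l]})=e(\cl^{[l]})$ absorbs exactly one factor of the integrand and gives $A_{r}=A_{r-1}$ (the paper makes the specific choice $\ce^{''}=(\cl^{\vee})^{\oplus(r-2)}\oplus\co$, but nothing depends on this). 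Where you genuinely diverge is in evaluating the constants. The paper computes $B=1$ for all $r$ at once by a one-line vanishing argument: for $\ce=\co^{\oplus r}$, $\cl=\co$, the pushforward of the universal quotient is a nowhere-vanishing section of $\co^{\oplus r[l]}$, hence $e(\co^{[l]})^{r}=0$ for $l\geqslant 1$; and it obtains the base case $A_{1}$ from $[S^{[l]}]^{\vir}=(-1)^{l}e(\Omega^{2[l]}_{S})\cap[S^{[l]}]$ together with the classical Hilbert-scheme integral cited from Nakajima (Exercise 9.23). You instead push the entire base case down to a canonical curve via \Cref{explicitquot} and \Cref{compatibility}, reducing to the identity $\int_{C^{(l)}}c_{l}(M^{[l]})=\binom{d}{l}$ on the symmetric product, proved by a transversality count of the $l$-element subsets of a reduced section divisor; this determines $A_{1}=1/(1-q)$ and $B_{1}=1$ simultaneously, and your recursion propagates $B_{r}=B_{r-1}$ as well. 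What your route buys is self-containedness --- no external Hilbert-scheme input, and an argument entirely in the spirit of \Cref{explicitquot} --- at the cost of one step needing care, which you correctly flag: on a canonical curve $\deg M=-c_{1}(\ce\otimes\cl)c_{1}(S)$ is typically negative, so the count $\binom{d}{l}$ (valid for $M$ positive with reduced zero divisor) must be extended to all $d$ by polynomiality in $(d,g)$ of tautological integrals over $C^{(l)}$, i.e. the curve-level universality the paper establishes mutatis mutandis inside the proof of \Cref{universality}. One cosmetic point: your exclusion of the variable $c_{1}(\ce)c_{1}(S)$ leans on the informal remark following \Cref{universality}; it is cleaner to run the \ref{twoa}-invariance argument of \Cref{independence}, replacing $(\ce,\cl)$ by $(\ce\otimes\cm,\cl\otimes\cm^{\vee})$ for an invertible sheaf $\cm$, which fixes $c_{1}(\ce\otimes\cl)$ while moving $c_{1}(\ce)$ freely.
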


\begin{proof}
	As above, 
	\begin{equation*}
		\sum_{l=0}^{\infty}q^{l}\int_{[\Quot^{l}_{S}(\ce)]^{\vir}}e(\cl^{[l]})^{r}=A^{c_{1}(\ce\otimes\cl)c_{1}(S)}B^{c_{1}(S)^{2}}
	\end{equation*}
	for power series $A=A_{r}$ and $B$ in $q$. To evaluate $B$, we may take $\ce=\co^{\oplus r}$ and $\cl=\co$. By pushing forward
	the universal quotient on $S\times\Quot^{l}_{S}(\co^{\oplus r})$, we obtain a section $s$ of $\co^{\oplus r [l]}$ whose
	value $s(q)\in\Hom(\co^{\oplus r},\cq_{q})$ at a point $q$ of $\Quot^{l}_{S}(\co^{\oplus r})$ is given by the quotient
	corresponding to $q$. Hence
	\begin{equation*}
	e(\co^{[l]})^{r}=e(\co^{\oplus r [l]})=0
	\end{equation*}
	for $l\geqslant 1$, and therefore $B=1$. We now show $A_{r}=(1-q)^{-1}$ by induction on $r$. For $r=1$ consider $\ce=\co$: then $S^{[l]}=\Quot^{l}_{S}(\co)$ has virtual class given by (\ref{eq:virtualclass}). Hence
	\begin{equation*}
	\sum_{l=0}^{\infty}q^{l}\int_{[S^{[l]}]^{\vir}}e(\cl^{[l]})=\sum_{l=0}^{\infty}(-q)^{l}\int_{S^{[l]}}e(\cl^{[l]})e(\Omega^{2[l]}_{S})=\left(\frac{1}{1-q}\right)^{c_{1}(\cl)c_{1}(S)},
	\end{equation*}
	which is well-known (see e.g. Exercise 9.23 in \cite{Na}). For $r\geqslant 2$, consider $\ce=\cl^{\vee}\oplus\ce^{'}$ with $\ce^{'}=(\cl^{\vee})^{\oplus(r-2)}\oplus\co$,
	and the standard exact sequence
	$$0\rightarrow\cl^{\vee}\rightarrow\ce\rightarrow\ce^{'}\rightarrow 0.$$
	We then have $c_{1}(\ce\otimes\cl)=c_{1}(\ce^{'}\otimes\cl)=c_{1}(\cl)$ and
	$$\int_{[\Quot^{l}_{S}(\ce)]^{\vir}}e(\cl^{[l]})^{r}=\int_{[\Quot^{l}_{S}(\ce^{'})]^{\vir}}e(\cl^{[l]})^{r-1}$$
	by \Cref{fundamentalclass}. Hence
	$$A_{r}^{c_{1}(\cl)c_{1}(S)}=A_{r-1}^{c_{1}(\cl)c_{1}(S)}$$
	for any $\cl$ on $S$, which implies $A_{r}=A_{r-1}$.
\end{proof}

\noindent 
\textbf{Acknowledgments.} We are indebted to R. Thomas for a number of helpful conversations on virtual fundamental classes. It is plain that this paper would not have seen the light of day without the pioneering work of Oprea and Pandharipande \cite{OP}. This work was partially supported by Imperial College and the Engineering and Physical Sciences Research Council [EP/L015234/1], and also by the Swiss National Science Foundation SNF-200020-182181.

{\small

}
\noindent Department of Mathematics, Imperial College \\
180 Queen's Gate, London SW7 2AZ, England \\ \\
Departement Mathematik, ETH Zürich \\
Rämistrasse 101, 8006 Zürich, Switzerland \\
samuel.stark@math.ethz.ch

\end{document}